\renewcommand\qedsymbol{$\blacksquare$}
\newcommand\qednoproof{\hfill\qedsymbol}
\numberwithin{equation}{section}
\newtheoremstyle{slplain}
  {\topsep}
  {\topsep}
  {\slshape}
  {0pt}
  {\bfseries}
  {.}
  {0.5em}
  {}
\theoremstyle{slplain}
  \newtheorem{THM}{Theorem}[section]
  \newtheorem{LEM}[THM]{Lemma}
  \newtheorem{COR}[THM]{Corollary}
\theoremstyle{definition}
  \newtheorem{EX}[THM]{Example}
\newcommand{\alex}{\mathrel{<_{\mathit{alex}}}}
\newcommand\nlongrightarrow{\longrightarrow\kern -1.45em/\kern 0.9em}
\newcommand{\UNION}{\bigcup}
\renewcommand{\le}{\leqslant}
\renewcommand{\ge}{\geqslant}
\newcommand{\0}{\varnothing}
\renewcommand{\phi}{\varphi}
\renewcommand{\epsilon}{\varepsilon}
\newcommand{\NN}{\mathbb{N}}
\newcommand{\QQ}{\mathbb{Q}}
\newcommand{\ZZ}{\mathbb{Z}}
\newcommand{\restr}[2]{\hbox{$#1$}\hbox{$\upharpoonright$}_{#2}}
\newcommand{\Boxed}[1]{\mbox{$#1$}}
\newcommand{\tp}{\mathrm{tp}}
\newcommand{\Emb}{\mathrm{Emb}}
\newcommand{\val}{\mathrm{val}}
\newcommand{\Stp}{\mathrm{Stp}}
\newcommand{\tree}{\mathrm{tree}}
\DeclareRobustCommand{\Stirling}{\genfrac\{\}{0pt}{}}
\title{Countable ordinals and big Ramsey degrees}
\author{%
  Dragan Ma\v sulovi\'c, Branislav \v Sobot\\
  University of Novi Sad, Faculty of Sciences\\
  Department of Mathematics and Informatics\\
  Trg Dositeja Obradovi\'ca 3, 21000 Novi Sad, Serbia\\
  e-mail: $\{$dragan.masulovic,branislav.sobot$\}$@dmi.uns.ac.rs}
\begin{document}
\maketitle

\begin{abstract}
  In this paper we consider big Ramsey degrees of finite chains in countable ordinals.
  We prove that a countable ordinal has finite big Ramsey degrees if and only if it is smaller than $\omega^\omega$.
  Big Ramsey degrees of finite chains in all other countable ordinals are infinite.

  \bigskip

  \noindent \textbf{Key Words:} countable ordinals, big Ramsey degrees

  \noindent \textbf{AMS Subj.\ Classification (2010): 05D10, 03E10} 
\end{abstract}

\section{Introduction}

The infinite version of Ramsey's Theorem claims that
for any $k \ge 2$, $n \ge 1$ and an arbitrary coloring $\chi : \binom \omega n \to k$ of $n$-element subsets of $\omega$
with $k$ colors there exists an infinite $U \subseteq \omega$ such that $\chi(X) = \chi(Y)$ for all $X, Y \in \binom U n$.
In symbols, $\omega \longrightarrow (\omega)^n_k$.
Interestingly, the same is not true for $\QQ$. One can
easily produce a Sierpi\'nski-style coloring of two-element subchains of $\QQ$ with two colors
and with no monochromatic subchain isomorphic to~$\QQ$. So, $\QQ \nlongrightarrow (\QQ)^2_2$.
However, Galvin showed in \cite{galvin1,galvin2} that
for every coloring $\chi : \binom \QQ 2 \to k$ there is an \emph{oligochromatic} copy of $\QQ$
in the following sense: for every coloring of 2-element subsets of $\QQ$ with $k$ colors there is
a $U \subseteq \QQ$ order-isomorphic to $\QQ$ such that the 2-element subsets of $U$ attain at most two colors.
In symbols: $\QQ \longrightarrow (\QQ)^2_{k, 2}$.
This observation was later generalized by Devlin in \cite{devlin}. For each $n \ge 1$ Devlin
found a positive integer $\mathbb{T}_n$ so that
$\QQ \longrightarrow (\QQ)^n_{k,\mathbb{T}_n}$ for every $n \ge 1$ and $k \ge 2$.
The integer $\mathbb{T}_n$ is referred to as the \emph{big Ramsey degree of~$n$ in~$\QQ$}
following Kechris, Pestov and Todor\v cevi\'c \cite{KPT} where big Ramsey degrees were
first introduced in the context of structural Ramsey theory.

In general, an integer $T \ge 1$ is a \emph{big Ramsey degree of a finite chain $n$ in a chain $C$}
if it is the smallest positive integer such that
$$
  C \longrightarrow (C)^n_{k, T} \quad\text{for all $k \ge 2$}.
$$
If no such $T$ exists, we say that \emph{$n$ does not have big Ramsey degree in $C$}.
We denote the big Ramsey degree of $n$ in $C$ by $T(n, C)$, and write
$T(n, C) = \infty$ if $n$ does not have the big Ramsey degree in~$C$.
A chain $C$ \emph{has finite big Ramsey degrees} if $T(n, C) < \infty$ for all integers $n \ge 1$.
In this parlance the infinite version of the Ramsey's theorem takes the following form:

\begin{THM}[Ramsey's Theorem, infinite version]
  $T(n, \omega) = 1$ for every integer $n \ge 1$. \qednoproof
\end{THM}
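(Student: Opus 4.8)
The plan is to observe that the assertion is, modulo unwinding the definition of big Ramsey degree, exactly the classical infinite Ramsey theorem $\omega \longrightarrow (\omega)^n_k$ together with the elementary fact that every infinite subset of $\omega$ is order-isomorphic to $\omega$. So the first step is to record that classical statement: for all $n \ge 1$, $k \ge 2$ and every coloring $\chi : \binom \omega n \to k$ there is an infinite $U \subseteq \omega$ with $\chi$ constant on $\binom U n$.

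Second, I would prove this by induction on $n$. The base case $n = 1$ is the pigeonhole principle: some color class $\chi^{-1}(i)$ is infinite. For the step, given $\chi : \binom{\omega}{n+1} \to k$, I build a descending chain of infinite sets $\omega = U_0 \supseteq U_1 \supseteq \cdots$ and elements $a_0 < a_1 < \cdots$ with $a_i = \min U_i$, such that for each $i$ the value $\chi(\{a_i\} \cup X)$ depends only on $i$ as $X$ ranges over $\binom{U_{i+1}}{n}$; the passage from $U_i$ to $U_{i+1}$ uses the inductive hypothesis applied to the coloring $X \mapsto \chi(\{a_i\} \cup X)$ on the infinite set $U_i \setminus \{a_i\}$. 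A final application of the pigeonhole principle to the colors attached to the indices $i$ yields an infinite $I \subseteq \omega$ on which this color is constant, and then $U = \{a_i : i \in I\}$ is monochromatic for $\chi$.

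Third, to conclude $T(n, \omega) = 1$: any infinite $U \subseteq \omega$ is, under the inherited order, order-isomorphic to $(\omega, <)$, so the monochromatic set produced above is a copy of $\omega$ inside $\omega$ on which $\binom U n$ attains exactly one color. Hence $\omega \longrightarrow (\omega)^n_{k,1}$ for every $k \ge 2$, which gives $T(n, \omega) \le 1$; since $T(n, \omega)$ is by definition a positive integer we also have $T(n, \omega) \ge 1$, and equality follows. There is no genuine obstacle here — the only substantive ingredient is the standard induction in the classical Ramsey theorem, and the rest is bookkeeping with the definition of big Ramsey degree; indeed one may simply cite Ramsey's theorem and skip the induction entirely.
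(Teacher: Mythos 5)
Your proposal is correct: it is the standard textbook proof of the infinite Ramsey theorem (induction on $n$ via iterated pigeonhole and a diagonal set of minima), followed by the routine observation that every infinite subset of $\omega$ is order-isomorphic to $\omega$, which is exactly the translation needed to get $T(n,\omega)=1$. The paper offers no proof at all here --- it states the result as the classical theorem of Ramsey and moves on --- so your argument supplies precisely the standard justification the paper is implicitly invoking.
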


Recall that a countable chain is \emph{scattered} if it does \emph{not} embed the chain of the rationals~$\QQ$.
Otherwise it is referred to as \emph{non-scattered}.
With Devlin's result \cite{devlin} at hand,
computing big Ramsey degrees of finite chains in countable non-scattered chains is surprisingly easy.
For chains $C$ and $D$ write $C \preccurlyeq D$ if there is an embedding $C \hookrightarrow D$,
and write $C \sim D$ whenever $C \preccurlyeq D$ and $D \preccurlyeq C$. Then it is easy to see that
$C \sim D$ implies that $T(n, C) = T(n, D)$ for all $n \ge 1$.
Therefore, for every non-scattered countable chain $C$ we have that
$T(n, C) = \mathbb{T}_n$, $n \ge 1$, because $C \sim \QQ$.

When it comes to scattered countable chains the situation turns out to
be much more complex. Laver proved in \cite{laver-decomposition} that $T(1, S) < \infty$ for every scattered chain~$S$.
In case of ordinals a proof of a more general result can be found in \cite[p.~189]{Fraisse-relations}:
one first shows that $T(1, \omega^\alpha) = 1$ for every ordinal $\alpha$ and
the Cantor Normal Form Theorem then yields $T(1, \alpha) < \infty$ for every ordinal $\alpha$.

In this paper we consider the big Ramsey degrees $T(n, \alpha)$ for a countable ordinal $\alpha$ and $2 \le n < \omega$.
Our main result is

\bigskip

\noindent
\textbf{Theorem} (Theorem~\ref{fbrd-scat.thm.MAIN})\textbf{.} {\slshape
  Let $\alpha$ be a countable ordinal.
  
  $(a)$ If $\alpha < \omega^\omega$ then $T(n, \alpha) < \infty$ for all $2 \le n < \omega$.

  $(b)$ If $\alpha \ge \omega^\omega$ then $T(n, \alpha) = \infty$ for all $2 \le n < \omega$. \qednoproof}
  
\bigskip

In Section~\ref{fbrd-scat.sec.prelim} we just fix some standard notions and notation.

In Sections~\ref{fbrd-scat.sec.alpha+m}, \ref{fbrd-scat.sec.alpha-cdot-m} and~\ref{fbrd-scat.sec.alpha-pow-m}
we show that if $\alpha$ is a countable ordinal with finite big Ramsey degrees, then so are
$\alpha + m$, $\alpha \cdot m$ and $\alpha^m$ where $m < \omega$.
In Section~\ref{fbrd-scat.sec.ord-pol} we then prove the main result of the paper.

The interest in Ramsey degrees $T(n, \omega^m)$ is far from new. It was established quite some time ago
that:

\begin{THM}\label{fbrd-scat.thm.williams}
  $T(n, \omega^m) < \infty$ for all $1 \le n, m < \omega$. \qednoproof
\end{THM}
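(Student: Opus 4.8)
The plan is to prove $T(n, \omega^m) < \infty$ for all $1 \le n, m < \omega$ by induction on $m$, treating $n$ as arbitrary throughout. The base case $m = 1$ is Ramsey's Theorem in the form $T(n, \omega) = 1$, already recorded in the excerpt. For the inductive step, the key observation is that $\omega^{m}$ decomposes as an $\omega$-indexed sum of copies of $\omega^{m-1}$: writing $\omega^{m} = \sum_{i < \omega} \omega^{m-1}$, every element of $\omega^{m}$ lies in a unique ``block'' $B_i$ (the $i$-th summand, order-isomorphic to $\omega^{m-1}$), and the blocks are linearly ordered by their index. Given a finite chain $X \subseteq \omega^{m}$ of size $n$, record its \emph{type} relative to this block structure: the induced partition of $X$ into nonempty pieces $X \cap B_i$ together with, for each such piece, the recursively-defined type of $X \cap B_i$ inside $B_i \cong \omega^{m-1}$. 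There are only finitely many such types for fixed $n$.

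The heart of the argument is a two-level colour-stabilisation. First I would fix a colouring $\chi : \binom{\omega^m}{n} \to k$. For each fixed pattern describing how an $n$-set meets the blocks — i.e., a composition $n = n_1 + \dots + n_r$ and, for each part, a ``partial type'' inside $\omega^{m-1}$ — one wants to find a copy of $\omega^{m}$ inside which $\chi$, restricted to $n$-sets of that pattern, depends only on a bounded amount of data. The mechanism is: apply the inductive hypothesis inside each block to stabilise the ``within-block'' contribution to the colour down to $T(n_j, \omega^{m-1})$-many possibilities (uniformly, passing to sub-copies of $\omega^{m-1}$ in infinitely many blocks); then, having reduced the colour of an $n$-set to a function of (the tuple of stabilised within-block colours, the pattern), apply Ramsey's Theorem on the index set $\omega$ of blocks to stabilise the remaining ``across-block'' dependence. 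Iterating over the finitely many patterns — at each stage thinning the ordinal to a sub-copy of $\omega^m$, which is possible because a sub-copy of $\omega^{m-1}$ chosen cofinally inside cofinally-many blocks again has order type $\omega^m$ — yields a single copy of $\omega^m$ on which $\chi$ of every $n$-set is determined by its type. Bounding the number of types gives the finite bound on $T(n,\omega^m)$.

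I expect the main obstacle to be bookkeeping rather than a conceptual leap: making precise the recursion on ``types'' of finite subsets relative to the iterated block decomposition, and checking carefully that each thinning step — selecting sub-copies of $\omega^{m-1}$ inside infinitely many blocks, and then selecting infinitely many blocks — genuinely produces a substructure isomorphic to $\omega^m$ (so that the finitely many successive thinnings can be composed). One must also handle the interaction between parts of the composition whose within-block type is itself ``split'' at a lower level, which is exactly where the induction feeds back in; organising the order of quantifiers (stabilise deepest blocks first, then work outward) is what keeps this finite. Since the excerpt already grants Theorem~\ref{fbrd-scat.thm.williams} as known, and the later sections reprove stronger statements, I would in practice cite the classical source and defer the detailed combinatorics to those sections.
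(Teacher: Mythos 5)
Your overall architecture --- induct on $m$, decompose $\omega^m=\sum_{i<\omega}\omega^{m-1}$, classify $n$-sets by a finite list of types, and stabilise type by type --- is the right shape (the paper itself does not prove this theorem but obtains it from the machinery of Sections~\ref{fbrd-scat.sec.alpha-cdot-m} and~\ref{fbrd-scat.sec.alpha-pow-m}, which is organised exactly this way). However, your key reduction step fails. After ``stabilising the within-block contribution,'' the colour of an $n$-set is \emph{not} a function of the pattern together with the tuple of stabilised within-block colours: it still depends on how the values sitting in \emph{different} blocks compare with one another, and your notion of type (a composition plus, separately for each piece, a recursive type inside its own block) records none of these cross-block relations. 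Concretely, take $m=2$, $n=2$ and the pattern ``one point in each of two blocks.'' Writing points of $\omega^2$ as pairs $(a,i)\in\omega\times\omega$ ordered antilexicographically, the within-block stabilisation is vacuous since $T(1,\omega)=1$, yet the colouring of $\{(a,i),(b,j)\}$ (with $i<j$) by the sign of $a-b$ attains at least two colours on every copy of $\omega^2$, and it does not even induce a well-defined colouring of the index pairs $\{i,j\}$ to which Ramsey's theorem could be applied. This is precisely the failure of the naive infinite Product Ramsey Theorem that the paper flags before Corollary~\ref{fbrd-scat.cor.inf-prod-ramsey}: a colouring of a product cannot be stabilised coordinate-by-coordinate. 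Your type count would give $T(2,\omega^2)\le 2$, whereas the true value is $4$.

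The repair is twofold. First, the type of an $n$-subset of $\omega^m$ must record the full order/equality pattern of \emph{all} coordinates simultaneously --- in the paper's language, the multiplicative type of Section~\ref{fbrd-scat.sec.alpha-cdot-m} and the tree type $\tp(f)$ of Section~\ref{fbrd-scat.sec.alpha-pow-m}, whose residual data $\val(f)$ is a tuple of finite subsets of $\omega$. Second, the stabilisation for a fixed type must be performed \emph{jointly} on that tuple: one encodes the whole tuple as a single finite subset of $\omega$ of size at most $nm$ together with a pattern describing coincidences and interleavings, and applies Ramsey's theorem once for each such size; this is the oligochromatic infinite Product Ramsey Theorem (Corollary~\ref{fbrd-scat.cor.inf-prod-ramsey}), which yields boundedly many colours per type rather than one. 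Summing over the finitely many types then gives the finite bound. With these two changes your induction goes through; your remarks about thinning (sub-copies of $\omega^{m-1}$ chosen inside cofinally many blocks reassembling into a copy of $\omega^m$) are fine as stated.
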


\noindent
A proof of this in case $n = 2$ (which easily generalizes to other values of $n$)
can be found in~\cite[Theorem 7.2.7]{williams}. Moreover, Galvin established that the sequence
$T(n, \omega^2)$ coincides with the OEIS sequence A000311, a sequence first studied by Ernst Schr\"oder in 1870.
He was motivated by the observation that, if $\mathcal U$ is a free ultrafilter on $\omega$, and if $t = T(n, \omega^2)$,
then the partition relation $\omega \longrightarrow (\mathcal U)^n_{t, t-1}$ implies that $\mathcal U$ is a $P$-point~\cite{Galvin-private-comm}.
Galvin's strategy can be applied to compute the values of $T(n, \omega^m)$ for all finite~$n$ and~$m$.

If one wants to show that $T(n, \alpha) < \infty$ for an ordinal~$\alpha < \omega^\omega$
an obvious line of attack would be to start from Theorem~\ref{fbrd-scat.thm.williams}
and, working ``bottom-up,'' propagate the property of having finite big Ramsey degrees to finite sums of finite powers of~$\omega$
(since countable ordinals smaller than $\omega^\omega$ can be expressed
as $\omega^{d_0} \cdot c_0 + \omega^{d_1} \cdot c_1 + \ldots + \omega^{d_{k-1}} \cdot c_{k-1}$
where both $d_0 > d_1 > \ldots > d_{k-1} \ge 0$ and $c_0, c_1, \ldots, c_{k-1} \ge 1$ are integers).
Unfortunately, we were unable to do that. We, however, managed to apply the ``top-down'' approach because
it is easy to show that the property of having finite big Ramsey degrees propagates
from finite sums of finite powers of~$\omega$ to their subsums (Lemma~\ref{fbrd-scat.lem.sub-sum}).
Having shown that $\alpha+m$, $\alpha \cdot m$ and $\alpha^m$, $m < \omega$, have finite big Ramsey degrees
whenever this holds for a countable ordinal $\alpha$ (Theorems~\ref{fbrd-scat.thm.alpha+m}, \ref{fbrd-scat.thm.alpha-cdot-m}
and~\ref{fbrd-scat.thm.alpha-pow-m}), we conclude that every ordinal of the form $(\omega \cdot c + 1)^d$, where $c, d < \omega$,
has finite big Ramsey degrees. For appropriately chosen $c$ and $d$ we expand this expression and pass to a subsum
to get the result for an arbitrary ordinal~$\alpha < \omega^\omega$.

We then move on to show that $T(n, \alpha) = \infty$ whenever $n \ge 2$ and $\alpha \ge \omega^\omega$.
Our starting point is an unpublished result of Galvin about square bracket partition relations
which express ``strong counterexamples'' to ordinary partition relations.
For chains $C$, $D_0$, $D_1$, $D_2$, \ldots, and $n < \omega$ write
$$
  C \longrightarrow [D_0, D_1, D_2, \ldots]^n
$$
to denote that for every coloring $\chi : \Emb(n, C) \to \omega$ there is an $i < \omega$ and
a subchain $S \subseteq C$ such that $S \cong D_i$ and $i \notin \chi(\Emb(n, S))$.

Erd\H os and Hajnal note in~\cite[p.~275]{erdos-hajnal} that in 1971
Galvin proved the following:

\begin{THM}[Galvin 1971]\label{fbrd-scat.thm.bracket}
  If $S$ is a scattered chain that contains no uncountable well-ordered subsets then
  $S \not\longrightarrow [\omega, \omega^2, \omega^2, \omega^3, \omega^3, \ldots]^2.$ \qednoproof
\end{THM}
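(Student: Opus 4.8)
\medskip
\noindent\emph{Proof plan.}
Unravelling the square bracket notation, $S \not\longrightarrow [\omega, \omega^2, \omega^2, \omega^3, \omega^3, \ldots]^2$ asks for a single coloring $\chi \colon \Emb(2,S) \to \omega$ of the increasing pairs of $S$ such that color $0$ occurs in every subchain of $S$ of order type $\omega$, and for each $k \ge 2$ \emph{both} colors $2k-3$ and $2k-2$ occur in every subchain of order type $\omega^k$. I would assemble $\chi$ from two ingredients: a Sierpi\'nski-type coloring, which disposes of the $\omega$-level and serves as a scaffold, and a ``level function'' on the remaining pairs, defined by induction on the Hausdorff rank of $S$.

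First, fix a well-ordering $\prec$ of the underlying set of $S$ and set $\chi(x,y) = 0$ for every increasing pair $x <_S y$ with $x \prec y$. Any infinite subchain of $S$ is well-ordered by $\prec$, hence contains a $\prec$-increasing pair, so every subchain isomorphic to an ordinal receives color $0$; this takes care of all copies of $\omega$ (indeed of all copies of every $\omega^k$) once and for all. It remains to color the \emph{inversions}, that is, the increasing pairs $x <_S y$ with $y \prec x$, by the positive colors so that every copy of $\omega^k$ ($k \ge 2$) realizes both of its designated colors among its inversions; note that such a copy has infinitely many inversions, since on it $<_S$ has order type $\omega^k$ while $\prec$ has order type at most $|S|$.

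The positive colors should encode a \emph{level} $\lambda(x,y) \in \{1, 2, 3, \ldots\}$ of an inversion together with one extra bit, designed so that every copy of $\omega^k$ is forced to carry inversions of level exactly $k-1$ realizing both values of the bit, while copies of $\omega^j$ with $j < k$ never attain level $k-1$. To define $\lambda$ I would invoke Hausdorff's structure theorem for scattered chains and recurse on the rank: writing $S = \sum_{i \in I} S_i$ with the $S_i$ of smaller rank and $I$ finite, a well-order, a reverse well-order, or $\ZZ$, one is given colorings of the $S_i$ and extends to $S$ by coloring the cross-pairs according to a descent (a ``walk'') from $y$ down to $x$ through the decomposition, read off against $\prec$. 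The hypothesis that $S$ has no uncountable well-ordered subset enters exactly here: it forces the well-ordered index chains occurring in the decomposition to be countable, which keeps the palette countable and the recursion well-founded. One value of the extra bit should record the phenomenon ``the copy meets infinitely many summands'' (which is what pushes a level upward) and the other ``the copy already realizes $\omega^{k-1}$ inside a single summand''. Weaving in $\prec$ is essential precisely because, by Ramsey's theorem, one cannot force two colors onto a single copy of $\omega$: the two colors demanded in a copy of $\omega^k$ must come from these two genuinely different sources.

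The main obstacle is the construction and verification of the level function, not the Sierpi\'nski scaffolding. One must prove that \emph{every} copy of $\omega^k$, however it sits inside the iterated sums supplied by Hausdorff's theorem and however its points interleave with the remaining points of $S$, necessarily carries inversions of level $k-1$ of both flavors, while no copy of a smaller power does. The delicate point is robustness under passing to subcopies: the limit structure witnessing that a subchain has order type $\omega^k$ is visible inside that subchain but can be blurred in $S$ --- there may be points of $S$ trapped between a point $y$ of the copy and the supremum in the copy of the points of the copy below $y$ --- so $\lambda$ cannot simply read off the Cantor-normal-form rank of $y$ in an ambient copy. Carrying out this combinatorial analysis of how order type $\omega^k$ must embed into the Hausdorff hierarchy is the heart of Galvin's argument; I would first settle the base case $S = \omega^2$, where all of these difficulties already surface (every copy of $\omega^2$ meets infinitely many ``blocks'' in infinite sets and accumulates, from below, at genuine limit points of $S$), and then propagate the argument up the rank.
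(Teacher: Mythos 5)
The first thing to say is that the paper does not prove this theorem at all: it is quoted as an unpublished 1971 result of Galvin, recorded by Erd\H os and Hajnal, with a pointer to Todor\v cevi\'c's paper for a proof. So there is no in-paper argument to compare yours against, and your proposal has to stand on its own. Your unravelling of the square bracket relation is correct, and the Sierpi\'nski scaffold does dispose of color $0$: every subchain of type $\omega$, hence every copy of every $\omega^k$, contains a pair increasing in both $<_S$ and $\prec$. But even here there is a loose end: ``fix a well-ordering $\prec$ of the underlying set of $S$'' is not enough. If $S = \omega^2$ and you happen to take $\prec$ equal to $<_S$, there are no inversions anywhere and the construction dies immediately; your stated reason that a copy of $\omega^k$ has infinitely many inversions (``$\prec$ has order type at most $|S|$'') does not exclude $\prec$ agreeing with $<_S$ on that copy. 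You need $\prec$ of order type $\omega$ when $S$ is countable, and a separate argument when $S$ is uncountable, where no single choice of $\prec$ obviously prevents some countable copy of $\omega^k$ from being $\prec$-increasing.

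The genuine gap is the second half, which is where all the content of the theorem lives. The coloring of the inversions --- the level function $\lambda$ together with the extra bit --- is never actually defined: you say you \emph{would} define it by recursion on the Hausdorff rank, and that one \emph{must prove} that every copy of $\omega^k$ realizes level $k-1$ in both flavors while smaller powers do not, and you correctly identify this verification as the heart of Galvin's argument; but you do not carry it out even for the base case $S = \omega^2$. As written there is no candidate coloring to check, so nothing is proved. To close the gap you should do exactly what you propose at the end but have not done: first construct and fully verify a coloring witnessing $\omega^2 \not\longrightarrow [\omega, \omega^2, \omega^2]^2$ --- already here you must exhibit two colors on the inversions such that every copy of $\omega^2$, however it meets the blocks of $\omega^2 = \sum_{i<\omega}\omega$ and however its points interleave with the rest of $S$, realizes both --- and only then attempt the induction on Hausdorff rank. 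Alternatively, follow the proof in the reference the paper cites (Todor\v cevi\'c, \emph{Oscillations of Sets of Integers}, p.~234), where the role of your ad hoc level function is played by the oscillation map, which comes with the required invariance properties already established.
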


\noindent
A proof of this result can be found in~\cite[p.~234]{Todorcevic-OscInts}.
As an immediate consequence of Theorem~\ref{fbrd-scat.thm.bracket} we get that
$T(n, \omega^\alpha) = \infty$ for every countable ordinal~$\alpha \ge \omega$
(Lemma~\ref{fbrd-scat.lem.infty}). The conclusion then follows by another application of Lemma~\ref{fbrd-scat.lem.sub-sum}.

In some fairly simple cases we were able to compute the exact big Ramsey degrees.
In Section~\ref{fbrd-scat.sec.alpha+m} we show that
$T(n, \omega + m) = \sum_{j=0}^n \binom mj$ for all $n \ge 2$ and $m \ge 1$, while
in Section~\ref{fbrd-scat.sec.alpha-cdot-m} we show that $T(n, \omega \cdot m) = m^n$.
As a spin-off in Section~\ref{fbrd-scat.sec.alpha-cdot-m}
we prove Corollary~\ref{fbrd-scat.cor.inf-prod-ramsey} which we see as an
infinite analogue of the Finite Product Ramsey Theorem:

\begin{THM}[Finite Product Ramsey Theorem \cite{GRS}]
  For every choice of integers $p \ge 1$, $n_0, \ldots, n_{p-1} \ge 1$, $m_0 \ge n_0$, \ldots, $m_{p-1} \ge n_{p-1}$, and $k \ge 2$,
  there are integers $\ell_0 \ge m_0$, \ldots, $\ell_{p-1} \ge m_{p-1}$
  such that for all finite sets $L_0$ of size $\ell_0$, \ldots, $L_{p-1}$ of size $\ell_{p-1}$
  and for every coloring $\chi : \binom{L_0}{n_0} \times \ldots \times \binom{L_{p-1}}{n_{p-1}} \to k$
  there exist $M_0 \subseteq L_0$ of size $m_0$, \ldots, $M_{p-1} \subseteq L_{p-1}$ of size $m_{p-1}$
  such that $\chi(A_0, \ldots, A_{p-1}) = \chi(B_0, \ldots, B_{p-1})$
  for all $(A_0, \ldots, A_{p-1}), (B_0, \ldots, B_{p-1}) \in \binom{M_0}{n_0} \times \ldots \times \binom{M_{p-1}}{n_{p-1}}$.
  \qednoproof
\end{THM}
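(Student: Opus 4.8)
The plan is to prove the statement by induction on the number of factors $p$, peeling off one coordinate at a time so that each step reduces to the ordinary finite Ramsey Theorem (itself the case $p = 1$ below, and available in \cite{GRS}). For $p = 1$ the claim \emph{is} the finite Ramsey Theorem: choose $\ell_0$ with $\ell_0 \longrightarrow (m_0)^{n_0}_k$, note $\ell_0 \ge m_0$, and take for $M_0$ any $\chi$-homogeneous subset of $L_0$ of size $m_0$.

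For the inductive step I would assume the result for $p - 1$ factors and fix parameters $n_0, \ldots, n_{p-1}$, $m_0 \ge n_0, \ldots, m_{p-1} \ge n_{p-1}$ and $k$. First use the finite Ramsey Theorem to pick $\ell_{p-1} \ge m_{p-1}$ with $\ell_{p-1} \longrightarrow (m_{p-1})^{n_{p-1}}_k$, and set $K := k^{\binom{\ell_{p-1}}{n_{p-1}}}$, the number of $k$-colorings of the $n_{p-1}$-element subsets of an $\ell_{p-1}$-element set. Then apply the induction hypothesis to the first $p - 1$ coordinates, with arities $n_0, \ldots, n_{p-2}$, targets $m_0, \ldots, m_{p-2}$ and $K$ colors, to obtain $\ell_0 \ge m_0, \ldots, \ell_{p-2} \ge m_{p-2}$. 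I claim $\ell_0, \ldots, \ell_{p-1}$ are as required.

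To see this, given sets $L_0, \ldots, L_{p-1}$ of the prescribed sizes and a $k$-coloring $\chi$ of $\binom{L_0}{n_0} \times \cdots \times \binom{L_{p-1}}{n_{p-1}}$, I would pass to the fibre coloring $\psi$ of $\binom{L_0}{n_0} \times \cdots \times \binom{L_{p-2}}{n_{p-2}}$ sending $(A_0, \ldots, A_{p-2})$ to the function $B \mapsto \chi(A_0, \ldots, A_{p-2}, B)$ on $\binom{L_{p-1}}{n_{p-1}}$, regarded as one of the $K = k^{\binom{L_{p-1}}{n_{p-1}}}$ available colors. The induction hypothesis makes $\psi$ constant on a subproduct $\binom{M_0}{n_0} \times \cdots \times \binom{M_{p-2}}{n_{p-2}}$ with $|M_i| = m_i$, with common value some $f : \binom{L_{p-1}}{n_{p-1}} \to k$; the choice of $\ell_{p-1}$ then makes $f$ constant, with some value $c$, on a subset $M_{p-1} \subseteq L_{p-1}$ of size $m_{p-1}$. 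Chasing the definitions, $\chi(A_0, \ldots, A_{p-1}) = f(A_{p-1}) = c$ for every $(A_0, \ldots, A_{p-1}) \in \binom{M_0}{n_0} \times \cdots \times \binom{M_{p-1}}{n_{p-1}}$, as wanted.

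The single delicate point — and, I expect, essentially the only obstacle — is the order in which the sizes are produced: $\ell_{p-1}$, and hence the number $K$ of colors fed to the inductive call, must be determined \emph{before} the induction hypothesis is invoked. This is exactly what keeps the recursion well founded and rules out a circular dependency between $\ell_{p-1}$ and $\ell_0, \ldots, \ell_{p-2}$; with this bookkeeping in place both homogenization steps are immediate, and the constraints $\ell_i \ge m_i$ are inherited for free from the finite Ramsey Theorem and the induction hypothesis.
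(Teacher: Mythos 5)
Your argument is correct: the induction on the number of factors, with the fibre coloring $\psi$ taking values in the $K = k^{\binom{\ell_{p-1}}{n_{p-1}}}$ possible colorings of $\binom{L_{p-1}}{n_{p-1}}$ and with $\ell_{p-1}$ fixed \emph{before} the inductive call so that $K$ is well defined, is the standard proof of the Finite Product Ramsey Theorem. The paper itself gives no proof of this statement --- it is quoted from \cite{GRS} --- and your argument is essentially the one found there, so there is nothing further to compare.
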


\noindent
It is a well-known fact (see \cite{GRS}) that the analogue of the above theorem fails in the infinite case.
One can easily construct a coloring $\chi : \binom \omega 1 \times \binom \omega 1 \to 2$ such that
no infinite subsets $S, T \subseteq \omega$ have the property that $\binom S 1 \times \binom T 1$ is monochromatic.
However, we can prove that for any choice of integers $p \ge 1$, $n_0, \ldots, n_{p-1} \ge 1$ and for any coloring
$\chi : \binom{\omega}{n_0} \times \ldots \times \binom{\omega}{n_{p-1}} \to k$
there is an infinite $S \subseteq \omega$ such that on the set
$\binom{S}{n_0} \times \ldots \times \binom{S}{n_{p-1}}$ the coloring $\chi$ attains at most
$\sum_{j=1}^N j! \Stirling Nj$ colors, where $N = n_0 + n_1 + \ldots + n_{s-1}$ and $\Stirling nk$ is the Stirling
number of the second kind. To put it briefly, although in the infinite case we do not necessarily have a monochromatic
subset, we can always prove the existence of an oligochromatic one.

We have seen that big Ramsey degrees of finite chains can be computed in $\omega$ (infinite version of Ramsey's theorem)
and in $\QQ$ (Devlin's result~\cite{devlin}). In Section~\ref{fbrd-scat.sec.conclusion} we do justice to another famous countable chain:
we compute big Ramsey degrees of finite chains in~$\ZZ$.

\section{Preliminaries}
\label{fbrd-scat.sec.prelim}

A \emph{chain} is a pair $(A, \Boxed<)$ where $<$ is a linear order on~$A$.
If we wish to stress that two chains are isomorphic as ordered sets we shall say that they are
\emph{order-isomorphic}.

Let $(A_i, \Boxed{<_i})$ be chains, $i < k$.
The linear orders $<_i$, $i < k$, induce the \emph{antilexicographic} order $\alex$
on $A_0 \times \ldots \times A_{k-1}$ as follows:
$(a_0, \ldots, a_{k-1}) \alex (b_0, \ldots, b_{k-1})$ iff
there is an $s < k$ such that $a_s \mathrel{<_s} b_s$, and $a_j = b_j$ for all $j > s$.

For a well-ordered set $A$ let $\tp(A)$ denote the \emph{order type} of $A$, that is, the unique ordinal $\alpha$
which is order-isomorphic to $A$.
Let $(A_\xi)_{\xi \in I}$ be a sequence of well-ordered sets indexed by a well-ordered set~$I$.
Then by $\sum_{\xi \in I} A_\xi$ we denote the well-ordered set
$(\UNION_{\xi \in I} A_\xi \times \{\xi\}, \Boxed{\alex})$.
If $I = m$ we shall simply write $A_0 + \ldots + A_{m-1}$ instead of $\sum_{\xi \in m} A_\xi$.
In particular, $A \cdot m = \underbrace{A + \ldots + A}_m$.

For another sequence $(B_\xi)_{\xi \in I}$ of well-ordered sets indexed by~$I$ and for a sequence
$(f_\xi)_{\xi \in I}$ of maps $f_\xi : A_\xi \to B_\xi$, $\xi \in I$, there is a unique
map
$
  \sum_{\xi \in I} f_\xi : \sum_{\xi \in I} A_\xi \to \sum_{\xi \in I} B_\xi
$
which takes $(\gamma, \xi)$ to $(f_\xi(\gamma), \xi)$.

For well-ordered sets $A$ and $B$ we let $A \cdot B = \tp(A \times B, \Boxed{\alex})$
and $A^m = (\underbrace{A \times \ldots \times A}_m, \Boxed{\alex})$.

If $(\alpha_\xi)_{\xi \in I}$ is a sequence of ordinals indexed by a well-ordered set~$I$
then $\sum_{\xi \in I} \alpha_\xi = \tp\left(\UNION_{\xi \in I} \alpha_\xi \times \{\xi\}, \Boxed{\alex}\right)$.
If $I = m$ we shall simply write $\alpha_0 + \ldots + \alpha_{m-1}$ instead of $\sum_{\xi \in m} \alpha_\xi$.
For ordinals $\alpha$ and $\beta$ we have that $\alpha \cdot \beta = \tp(\alpha \times \beta, \Boxed{\alex})$
and $\alpha^m = \underbrace{\alpha \cdot \ldots \cdot \alpha}_m$.

For the sake of simplicity we use the same notation for the operations on well-ordered sets and for the
operations on ordinals. Moreover, in some
proofs we shall move freely between $\sum_{\xi \in I} \alpha_\xi$ and $\left(\UNION_{\xi \in I} \alpha_\xi \times \{\xi\}, \Boxed{\alex}\right)$,
and between $\alpha \cdot \beta$ and $(\alpha \times \beta, \Boxed{\alex})$.
We believe that the context will always be sufficient to enable the correct parsing of the symbols.

A \emph{total quasiorder} is a reflexive and transitive binary relation such that each pair of elements
of the underlying set is comparable. Each total quasiorder $\sigma$ on a set $I$ induces an equivalence relation $\equiv_\sigma$
on $I$ and a linear order $\prec_\sigma$ on $I / \Boxed{\equiv_\sigma}$ in a natural way: $i \mathrel{\equiv_\sigma} j$ if
$(i, j) \in \sigma$ and $(j, i) \in \sigma$, while $(i / \Boxed{\equiv_\sigma}) \mathrel{\prec_\sigma} (j / \Boxed{\equiv_\sigma})$
if $(i, j) \in \sigma$ and $(j, i) \notin \sigma$.

Let $C$ be a chain and $n$ a finite chain. Then the set of all the $n$-element subchains of $C$
clearly corresponds to the set $\Emb(n, C)$ of all the embeddings $n \hookrightarrow C$. This is why we
formally introduce big Ramsey degrees as follows. For chains $A$, $B$, $C$ and integers $k \ge 2$ and $t \ge 1$
we write $C \longrightarrow (B)^{A}_{k, t}$ to denote that for every $k$-coloring $\chi : \Emb(A, C) \to k$
there is an embedding $w \in \Emb(B, C)$ such that $|\chi(w \circ \Emb(A, B))| \le t$.
For a chain $C$ and a finite chain $n$ we say that $n$ has \emph{finite big Ramsey degree in $C$}
if there exists a positive integer $t$ such that for each $k \ge 2$ we have that
$C \longrightarrow (C)^{n}_{k, t}$.
Equivalently, a finite chain $n$ has finite big Ramsey degree in a chain $C$
if there exists a positive integer $t$ such that for every $k \ge 2$ and every
$k$-coloring $\chi : \Emb(n, C) \to k$ there is a $U \subseteq C$ order-isomorphic to $C$
such that $|\chi(\Emb(n, U))| \le t$.
The least such $t$ is then denoted by $T(n, C)$. If such a $t$ does not exist
we say that $A$ \emph{does not have finite big Ramsey degree in $C$} and write
$T(A, C) = \infty$.
We say that a chain $C$ \emph{has finite big Ramsey degrees}
if $T(n, C) < \infty$ for all $n \ge 1$.
For any chain $C$ we let $T(0, C) = 1$ by definition.

Let us show that $T$ is monotonous in the first argument whenever the big Ramsey degrees are
calculated in a limit ordinal.

\begin{LEM}\label{fbrd-scat.lem.T-monotono}
  Let $\alpha$ be a limit ordinal and $m, n < \omega$. If $m \le n$ then $T(m, \alpha) \le T(n, \alpha)$.
\end{LEM}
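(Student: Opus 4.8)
The plan is to reduce a coloring of $\Emb(m,\alpha)$ to a coloring of $\Emb(n,\alpha)$ by padding each $m$-element subchain with a fixed suffix, exploiting that $\alpha$ is a limit ordinal and hence every element of $\alpha$ has infinitely many successors inside $\alpha$. Fix $n$ with $m\le n$ and set $d=n-m$. Given any $k$-coloring $\chi:\Emb(m,\alpha)\to k$, I will first choose, once and for all, a strictly increasing $d$-tuple $\beta_0<\beta_1<\ldots<\beta_{d-1}$ of ordinals below $\alpha$; since $\alpha$ is a limit ordinal such a tuple exists (e.g.\ any $d$ ordinals cofinal-below some $\gamma<\alpha$). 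Define $\chi':\Emb(n,\alpha)\to k$ as follows: given $w\in\Emb(n,\alpha)$ with image $\{a_0<a_1<\ldots<a_{n-1}\}$, if $a_m>\beta_{d-1}$ — equivalently, if the last $d$ elements lie strictly above the chosen block and are ``irrelevant'' — set $\chi'(w)=\chi(w\restriction m)$, where $w\restriction m$ is the embedding of $m$ onto $\{a_0<\ldots<a_{m-1}\}$; otherwise assign $\chi'(w)$ an arbitrary default value, say $0$. The point of the side condition is that it will be automatically satisfied once we pass to a suitable copy of $\alpha$.

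Next I apply the hypothesis $T(n,\alpha)<\infty$ to $\chi'$: there is $U\subseteq\alpha$ order-isomorphic to $\alpha$ with $|\chi'(\Emb(n,U))|\le T(n,\alpha)$. I then shrink $U$ to $U'\subseteq U$, still order-isomorphic to $\alpha$, whose least element exceeds $\beta_{d-1}$; this is possible because $\alpha$ is a limit ordinal, so $U$, being of type $\alpha$, has an initial segment of type $\alpha$ lying above any prescribed ordinal below $\alpha$ (concretely, deleting a proper initial segment of a limit ordinal leaves a copy of the same ordinal, and one can arrange the deleted part to contain all ordinals $\le\beta_{d-1}$ present in $U$). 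Now for every $v\in\Emb(m,U')$, the embedding $w\in\Emb(n,U')$ obtained by appending $\beta_0,\ldots,\beta_{d-1}$ is illegal since $\beta_i\notin U'$ — so instead I append any $d$ elements of $U'$ lying strictly above all of $v$ and above $\beta_{d-1}$, which exist because $U'$ has type $\alpha$, a limit ordinal, hence no largest element; then $w$ satisfies the side condition and $\chi'(w)=\chi(v)$. Therefore $\chi(\Emb(m,U'))\subseteq\chi'(\Emb(n,U'))\subseteq\chi'(\Emb(n,U))$, which has at most $T(n,\alpha)$ colors, giving $T(m,\alpha)\le T(n,\alpha)$.

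The main obstacle — really the only delicate point — is verifying that the side condition can always be met simultaneously, i.e.\ that after passing to $U'$ every $m$-subchain of $U'$ can be extended within $U'$ to an $n$-subchain whose bottom $m$ elements are exactly the given ones and whose top $d$ elements sit above $\beta_{d-1}$. This is where limit-ness of $\alpha$ is used twice: once to find the padding block $\beta_0<\ldots<\beta_{d-1}$ below $\alpha$, and once to ensure $U'$ has type $\alpha$ with arbitrarily large elements. If $\alpha$ were a successor ordinal the top element would be forced and this padding trick would fail, which is exactly why the lemma is stated for limit ordinals. I would also remark that the same argument shows $T(m,\alpha)\le T(n,\alpha)$ for $0\le m\le n$ once one recalls $T(0,\alpha)=1$ by convention, so the base case is harmless.
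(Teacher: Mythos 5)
Your overall strategy is the same as the paper's: turn a coloring $\chi$ of $\Emb(m,\alpha)$ into a coloring $\chi'$ of $\Emb(n,\alpha)$ by restricting each $n$-embedding to its first $m$ points, apply $T(n,\alpha)<\infty$, and then recover the $\chi$-values by extending $m$-subchains to $n$-subchains inside the resulting copy of $\alpha$. However, as written your argument relies on a false step. You claim that ``deleting a proper initial segment of a limit ordinal leaves a copy of the same ordinal,'' and you use this to shrink $U$ to a copy $U'$ of $\alpha$ lying entirely above $\beta_{d-1}$. This is wrong in general: $\omega\cdot 2$ is a limit ordinal, but removing its initial segment of type $\omega$ leaves a chain of type $\omega$, not $\omega\cdot 2$. (The property you are invoking holds for the additively indecomposable ordinals $\omega^\gamma$ --- this is exactly the ``remainder'' hypothesis the paper isolates before Lemma~\ref{fbrd-scat.lem.emb-sep} --- but Lemma~\ref{fbrd-scat.lem.T-monotono} concerns arbitrary limit ordinals.) Since your $\beta_i$ are allowed to be arbitrary ordinals below $\alpha$, the set $U'$ you construct need not be order-isomorphic to $\alpha$, and then $|\chi(\Emb(m,U'))|\le T(n,\alpha)$ does not certify $T(m,\alpha)\le T(n,\alpha)$.

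The gap is easy to close because the apparatus that forces you into it --- the fixed block $\beta_0<\dots<\beta_{d-1}$ and the side condition on $\chi'$ --- is unnecessary. Define $\chi'(h)=\chi(\restr{h}{m})$ for every $h\in\Emb(n,\alpha)$ with no case distinction; this is already well defined. Obtain $S\subseteq\alpha$ of type $\alpha$ with $|\chi'(\Emb(n,S))|\le T(n,\alpha)$. Since $S$ has limit order type, the elements of $S$ above any finite subset of $S$ form a set of infinite order type, so every $v\in\Emb(m,S)$ extends to some $w\in\Emb(n,S)$ with $\restr{w}{m}=v$, giving $\chi(\Emb(m,S))\subseteq\chi'(\Emb(n,S))$ directly; this is the paper's proof. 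Alternatively, your version can be repaired without shrinking $U$ at all: for $v\in\Emb(m,U)$ you only need $d$ elements of $U$ above both the largest element of $\im(v)$ and $\beta_{d-1}$, and these exist because for any $\gamma<\alpha$ the set $\{u\in U: u>\gamma\}$ has infinite order type (otherwise $\tp(U)$ would be a successor ordinal or would be at most $\gamma+1<\alpha$). A third fix is to take the $\beta_i$ to be finite ordinals, since deleting a \emph{finite} initial segment does preserve an infinite order type. Any of these makes your argument correct and essentially identical to the paper's.
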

\begin{proof}
  Let $T(n, \alpha) = t \in \NN$.
  Take any $k \ge 2$ and let $\chi : \Emb(m, \alpha) \to k$ be a coloring.
  Define $\chi' : \Emb(n, \alpha) \to k$ by $\chi'(h) = \chi(\restr hm)$.
  Then there is an $S \subseteq \alpha$ order-isomorphic to $\alpha$ such that
  $|\chi'(\Emb(n, S))| \le t$. Since $S$ is order-isomorphic to a limit ordinal, every
  $m$-element subchain of $S$ can be extended to an $n$-element subchain of $S$, whence
  $\chi(\Emb(m, S)) \subseteq \chi'(\Emb(n, S))$. Therefore,
  $|\chi(\Emb(n, S))| \le t$.
\end{proof}

\section{Adding a finite ordinal}
\label{fbrd-scat.sec.alpha+m}

For a well-ordered set $A$ and an embedding $f : n \hookrightarrow A + m$
recall that $f(i) = (a, 0)$ for some $a \in A$ or $f(i) = (j, 1)$ for some $j \in m$. Let
$$
  \tp(f) = \{j \in m : (\exists i \in n) f(i) = (j, 1)\}.
$$
denote the \emph{additive type of $f$}. For an additive type $\tau \subseteq m$ let
$$
  \Emb_\tau(n, A + m) = \{ f \in \Emb(n, A + m) : \tp(f) = \tau \}.
$$

\begin{LEM}\label{fbrd-scat.lem.1-additive}
  Let $A$ be a countable well-ordered set with finite big Ramsey degrees.
  Fix integers $n \ge 1$ and $m \ge 1$.
  For every additive type $\tau \subseteq m$ with $|\tau| \le n$, every $k \ge 2$ and every coloring
  $
    \chi : \Emb_\tau(n, A + m) \to k
  $
  there is a $U \subseteq A$ order-isomorphic to $A$ such that
  $
    |\chi(\Emb_\tau(n, U + m))| \le T(n - |\tau|, A)
  $.
  (Recall that $T(0, A) = 1$ by definition.)
\end{LEM}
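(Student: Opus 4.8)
The plan is to exploit the rigid shape that the additive type $\tau$ forces on an embedding $f : n \hookrightarrow A+m$. In $A+m$ every point of the copy of $A$ precedes every point of the copy of $m$, so, since $f$ is order-preserving, the set of coordinates of $n$ that $f$ maps into the copy of $m$ is a final segment of $n$; because $\tp(f)=\tau$ this final segment has exactly $|\tau|$ elements, namely $\{\,n-|\tau|,\dots,n-1\,\}$, and on it $f$ must be the unique increasing bijection onto $\tau\times\{1\}$. Hence $f$ is completely determined by its restriction to the initial segment $\{\,0,\dots,n-|\tau|-1\,\}$, which, after forgetting the $\{0\}$ tag, is nothing but an arbitrary embedding $n-|\tau|\hookrightarrow A$.

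First I would turn this observation into a map $\Phi$ sending $f\in\Emb_\tau(n,A+m)$ to the embedding $n-|\tau|\hookrightarrow A$ read off from the first $n-|\tau|$ coordinates of $f$, and then check that $\Phi$ is a bijection onto $\Emb(n-|\tau|,A)$: its inverse glues a given $g\in\Emb(n-|\tau|,A)$ to the fixed increasing enumeration of $\tau$, and the result is automatically an embedding because the copy of $A$ lies entirely below the copy of $m$. The point I would stress is that $\Phi$ is compatible with passing to subchains, i.e. for every $U\subseteq A$ it restricts to a bijection $\Emb_\tau(n,U+m)\to\Emb(n-|\tau|,U)$.

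With $\Phi$ in hand the lemma becomes a change of variables. Given $k\ge 2$ and a coloring $\chi:\Emb_\tau(n,A+m)\to k$, I would set $\chi'=\chi\circ\Phi^{-1}:\Emb(n-|\tau|,A)\to k$. Since $A$ has finite big Ramsey degrees, $T(n-|\tau|,A)$ is a finite positive integer — with the convention $T(0,A)=1$ taking care of the degenerate case $|\tau|=n$, where $\Emb_\tau(n,A+m)$ consists of a single embedding — so there is $U\subseteq A$ order-isomorphic to $A$ with $|\chi'(\Emb(n-|\tau|,U))|\le T(n-|\tau|,A)$. Because $\Phi$ carries $\Emb_\tau(n,U+m)$ bijectively onto $\Emb(n-|\tau|,U)$, the colors taken by $\chi$ on $\Emb_\tau(n,U+m)$ coincide with those taken by $\chi'$ on $\Emb(n-|\tau|,U)$, and the desired bound drops out.

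I do not expect a genuine combinatorial obstacle here: the whole content is the structural reduction identifying $\tau$-typed $n$-embeddings of $A+m$ with $(n-|\tau|)$-embeddings of $A$. The only real work is bookkeeping — writing the correspondence $\Phi$ cleanly, verifying that it is a bijection which is natural in the subchain $U$, and keeping the index shift $n\mapsto n-|\tau|$ straight, including the boundary cases $|\tau|=0$ and $|\tau|=n$.
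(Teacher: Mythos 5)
Your proposal is correct and follows essentially the same route as the paper's own proof: the bijection $\Phi$ sending $f$ to its restriction to the first $n-|\tau|$ coordinates, the transferred coloring $\chi'=\chi\circ\Phi^{-1}$, and the application of $T(n-|\tau|,A)<\infty$ are exactly the paper's argument, with the degenerate case $|\tau|=n$ handled the same way. The extra details you supply (why the $\tau$-part of $f$ is forced, and the naturality of $\Phi$ in the subchain $U$) are correct and merely make explicit what the paper leaves implicit.
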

\begin{proof}
  Assume, first, that $|\tau| = n$. Then for every $U \subseteq A$ we have that
  $|\Emb_\tau(n, U + m)| = 1$, whence $|\chi(\Emb_\tau(n, U + m))| = 1 = T(0, A)$.
  So, let $s = |\tau| < n$ and let
  $
    \Phi : \Emb_\tau(n, A + m) \to \Emb(n - s, A)
  $
  be the bijection that takes $f \in \Emb_\tau(n, A + m)$ to $\restr{f}{n-s} \in \Emb(n - s, A)$.
  Fix a $k \ge 2$ and a coloring
  $
    \chi : \Emb_\tau(n, A + m) \to k.
  $
  Let
  $
    \chi' : \Emb(n - s, A) \to k
  $
  be the coloring defined by $\chi'(f) = \chi(\Phi^{-1}(f))$. Then by the assumption that $A$ has finite big Ramsey degrees
  there is a $U \subseteq A$ order-isomorphic to $A$ such that
  $
    |\chi'(\Emb(n - s, U))| \le T(n - s, A)
  $.
  But then it easily follows that $|\chi(\Emb_\tau(n, U + m))| \le T(n - s, A)$.
\end{proof}

\begin{THM}\label{fbrd-scat.thm.alpha+m}
  Let $\alpha$ be a countable ordinal with finite big Ramsey degrees.
  Then $\alpha + m$ has finite big Ramsey degrees for all $m \ge 1$. More precisely,
  $$
    T(n, \alpha + m) \le \sum_{j=0}^n \binom mj \cdot T(n - j, \alpha),
  $$
  where we take $T(0, \alpha) = 1$ and $\binom mj = 0$ whenever $m < j$.
\end{THM}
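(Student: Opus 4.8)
The plan is to split a coloring of $\Emb(n,\alpha+m)$ according to the additive type $\tau\subseteq m$ of each embedding, handle each piece with Lemma~\ref{fbrd-scat.lem.1-additive}, and then merge the resulting copies of $\alpha$ into a single copy. First I would fix $k\ge 2$ and a coloring $\chi:\Emb(n,\alpha+m)\to k$. For each additive type $\tau\subseteq m$ with $|\tau|\le n$, let $\chi_\tau$ be the restriction of $\chi$ to $\Emb_\tau(n,\alpha+m)$. Note $\Emb(n,\alpha+m)=\bigsqcup_{\tau}\Emb_\tau(n,\alpha+m)$, the union ranging over types $\tau\subseteq m$ with $|\tau|\le n$, since an embedding of an $n$-chain can use at most $n$ of the top $m$ points; there are $\sum_{j=0}^{n}\binom mj$ such types.

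Next I would apply Lemma~\ref{fbrd-scat.lem.1-additive} iteratively. Since $\alpha$ has finite big Ramsey degrees, so does every $U\subseteq\alpha$ order-isomorphic to $\alpha$ (as $U\sim\alpha$, and $T(n,\cdot)$ is an isomorphism invariant). Enumerate the relevant types $\tau_0,\dots,\tau_{r-1}$ where $r=\sum_{j=0}^n\binom mj$. Set $U_0=\alpha$; having built $U_i\subseteq\alpha$ order-isomorphic to $\alpha$, apply Lemma~\ref{fbrd-scat.lem.1-additive} to the type $\tau_i$, the well-ordered set $U_i$, and the coloring $\chi_{\tau_i}$ restricted to $\Emb_{\tau_i}(n,U_i+m)$, obtaining $U_{i+1}\subseteq U_i$ order-isomorphic to $\alpha$ with
$$
  |\chi(\Emb_{\tau_i}(n,U_{i+1}+m))|\le T(n-|\tau_i|,\alpha).
$$
Put $U=U_{r-1}$ (the last one, after processing all types). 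Because $U\subseteq U_{i+1}$ for every $i$, and the value of $\chi$ on $\Emb_{\tau_i}(n,U+m)$ can only involve colors already attained on $\Emb_{\tau_i}(n,U_{i+1}+m)$, the bound $|\chi(\Emb_{\tau_i}(n,U+m))|\le T(n-|\tau_i|,\alpha)$ survives the subsequent shrinking. Then $U$ is order-isomorphic to $\alpha$, hence $U+m$ is order-isomorphic to $\alpha+m$, and
$$
  |\chi(\Emb(n,U+m))|\le\sum_{i<r}|\chi(\Emb_{\tau_i}(n,U+m))|\le\sum_{\tau\subseteq m,\,|\tau|\le n}T(n-|\tau|,\alpha)=\sum_{j=0}^n\binom mj\cdot T(n-j,\alpha),
$$
which is exactly the claimed bound (with the conventions $T(0,\alpha)=1$ and $\binom mj=0$ for $m<j$).

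The main thing to be careful about — and the only real obstacle — is the monotonicity of the per-type bound under iterated shrinking: one must observe that passing from $U_{i+1}$ to a smaller $U$ can only decrease the set of colors $\chi$ takes on type-$\tau_i$ embeddings, so the estimates established at earlier stages are not spoiled at later stages. This is immediate once we note $\Emb_{\tau_i}(n,U+m)\subseteq\Emb_{\tau_i}(n,U_{i+1}+m)$ whenever $U\subseteq U_{i+1}$. Everything else is bookkeeping: the disjoint decomposition of $\Emb(n,\alpha+m)$ by additive type, the count of types, and the fact that big Ramsey degrees are preserved under $\sim$.
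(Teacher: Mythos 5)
Your proposal is correct and follows essentially the same route as the paper's own proof: decompose $\Emb(n,\alpha+m)$ by additive type, apply Lemma~\ref{fbrd-scat.lem.1-additive} once per type along a decreasing chain of copies of $\alpha$, and sum the per-type bounds, using that shrinking the copy only shrinks the color sets established at earlier stages. The only cosmetic difference is that you make explicit the (correct) observations that there are $\sum_{j=0}^{n}\binom{m}{j}$ types and that each intermediate $U_i$ again has finite big Ramsey degrees because it is order-isomorphic to $\alpha$.
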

\begin{proof}
  Let $Q = \{ \tau \subseteq m : |\tau| \le n \}$ be the set of all the additive types realized by
  members of $\Emb(n, \alpha + m)$. Let $Q = \{\tau_0, \tau_1, \ldots, \tau_{t-1}\}$ so that $|Q| = t$.
  Fix a $k \ge 2$ and a coloring
  $
    \chi : \Emb(n, \alpha + m) \to k
  $.
  By Lemma~\ref{fbrd-scat.lem.1-additive} there is a $U_0 \subseteq \alpha$ order-isomorphic to $\alpha$ such that
  $$
    |\chi(\Emb_{\tau_0}(n, U_0 + m))| \le T(n - |\tau_0|, \alpha).
  $$
  By the same lemma for each $j \in \{1, \ldots, t-1\}$ we then inductively obtain
  a $U_j \subseteq U_{j-1}$ order-isomorphic to $U_{j-1}$ (and hence to $\alpha$) such that
  $$
    |\chi(\Emb_{\tau_j}(n, U_j + m))| \le T(n - |\tau_j|, \alpha).
  $$
  Then, using the fact that $U_{t-1} \subseteq U_j$ we have that
  \begin{align*}
    |\chi(\Emb(n, U_{t-1} + m))|
    &= \sum_{j < t} |\chi(\Emb_{\tau_j}(n, U_{t-1} + m))|\\
    &\le \sum_{j < t} |\chi(\Emb_{\tau_j}(n, U_{j} + m))|\\
    &\le \sum_{j < t} T(n - |\tau_{j}|, \alpha) = \sum_{j=0}^n \binom mj \cdot T(n - j, \alpha). \qedhere
  \end{align*}
\end{proof}

\begin{COR}
  For all $m \ge 1$ and $n \ge 1$ we have that
  $$
    T(n, \omega + m) = \sum_{j=0}^n \binom mj,
  $$
  where we take $\binom mj = 0$ whenever $m < j$. In particular, if $n \ge m$ then
  $T(n, \omega + m) = 2^m$.
\end{COR}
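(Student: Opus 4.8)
The plan is to read off the upper bound directly from Theorem~\ref{fbrd-scat.thm.alpha+m} and to match it with an explicit worst‑case colouring. For the upper bound, note that by the infinite Ramsey theorem $T(n-j,\omega)=1$ for all $n$ and $j$, so Theorem~\ref{fbrd-scat.thm.alpha+m} applied with $\alpha=\omega$ gives at once
$$
  T(n,\omega+m)\le\sum_{j=0}^n\binom mj\cdot T(n-j,\omega)=\sum_{j=0}^n\binom mj .
$$

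The crucial structural remark, which I would record first, is this. Write $\omega+m=\omega\times\{0\}\cup(m\times\{1\})$ with the antilexicographic order, so that the second block $m\times\{1\}$ of $m$ elements lies entirely above the block $\omega\times\{0\}$. Then every $U\subseteq\omega+m$ order‑isomorphic to $\omega+m$ satisfies $m\times\{1\}\subseteq U$, and $U\cap(\omega\times\{0\})$ is infinite. Indeed, $U\cap(\omega\times\{0\})$ is a subset of a set of order type $\omega$, hence well‑ordered; were it finite, $U=(U\cap(\omega\times\{0\}))\cup(U\cap(m\times\{1\}))$ would be finite, contradicting $\tp(U)=\omega+m$, so it has order type $\omega$. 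Since $U$ is the ordered sum of $U\cap(\omega\times\{0\})$ and the finite set $U\cap(m\times\{1\})$, we get $\omega+m=\tp(U)=\omega+|U\cap(m\times\{1\})|$, forcing $|U\cap(m\times\{1\})|=m$, i.e.\ $m\times\{1\}\subseteq U$.

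For the lower bound, let $Q=\{\tau\subseteq m:|\tau|\le n\}$, so that $|Q|=\sum_{j=0}^n\binom mj\ge 1+m\ge 2$; identify the colour set with $Q$ and colour $f\in\Emb(n,\omega+m)$ by its additive type, $\chi(f)=\tp(f)$. If $U$ is any copy of $\omega+m$ inside $\omega+m$ and $\tau\in Q$ with $|\tau|=j$, pick the $j$ elements of $m\times\{1\}$ indexed by $\tau$ (available since $m\times\{1\}\subseteq U$) together with any $n-j$ of the infinitely many elements of $U\cap(\omega\times\{0\})$; this is an embedding $n\hookrightarrow U$ of additive type $\tau$. Hence $\chi(\Emb(n,U))=Q$, so no copy of $\omega+m$ can bring the number of colours below $|Q|$, and therefore $T(n,\omega+m)\ge\sum_{j=0}^n\binom mj$. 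Combined with the upper bound this yields the equality; and if $n\ge m$ then $|\tau|\le m\le n$ for every $\tau\subseteq m$, so $Q$ is the full power set of $m$ and the common value is $2^m$.

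There is no genuine obstacle here: the only points that need (elementary) care are the structural remark that a copy of $\omega+m$ must contain the whole final $m$‑block, and the observation that consequently every additive type in $Q$ is realized inside every such copy.
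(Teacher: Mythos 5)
Your proposal is correct and follows essentially the same route as the paper: the upper bound is read off from Theorem~\ref{fbrd-scat.thm.alpha+m} using $T(j,\omega)=1$, and the lower bound comes from colouring embeddings by their additive type and observing that every copy of $\omega+m$ realizes all types in $Q$. The only difference is that you explicitly verify the structural fact that any $U\subseteq\omega+m$ order-isomorphic to $\omega+m$ must contain the entire final $m$-block and meet the $\omega$-block in an infinite set, a point the paper's proof leaves implicit by restricting attention to copies of the form $U+m$ with $U\subseteq\omega$ infinite.
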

\begin{proof}
  It immediately follows from Theorem~\ref{fbrd-scat.thm.alpha+m} that
  $$
    T(n, \omega + m) \le \sum_{j=0}^n \binom mj
  $$
  since $T(j, \omega) = 1$ for all $j \ge 0$.
  In order to conclude the proof we have to show that there exists a $k \ge t$ and a coloring
  $
    \chi : \Emb(n, \omega + m) \to k
  $
  such that $|\chi(\Emb(n, U + m))| \ge t$ for every infinite $U \subseteq \omega$, but that is straightforward.
  Take $k = t$ and consider
  $$
    \chi^* : \Emb(n, \omega + m) \to t
  $$
  such that $\chi^*(f) = j$ if and only if $\tp(f) = \tau_j$. Then it is easy to see that
  for every infinite $U \subseteq \omega$ members of $\Emb(n, U + m)$ realize all the types from $Q$,
  so $|\chi^*(\Emb(n, U + m))| = t$.
\end{proof}

\section{Multiplying by a finite ordinal}
\label{fbrd-scat.sec.alpha-cdot-m}

Let $A$ be a well-ordered set and let $f : n \hookrightarrow A \cdot m$ be an embedding.
For each $i < n$ we take that $f(i) = (a, \ell)$ where $a \in \alpha$ and $\ell \in m$. Therefore, we refer to
$\pi_0(f(i)) = a$ as the \emph{value} of $f(i)$, and to $\pi_1(f(i)) = \ell$ as the \emph{level} of $f(i)$.
Let
$$
  \val(f) = \restr{A}{\{\pi_0(f(i)) : i \in n\}}
$$
denote the subchain of $A$ induced by the values of $f$. The \emph{multiplicative type of $f$} is a tuple
$$
  \tp(f) = (p_0, p_1, \dots, p_{m-1}, \sigma),
$$
where
\begin{itemize}
\item
  $p_\ell$ is the number of those $i < n$ such that $f$ takes $i$ to the $\ell$th level
  (note that $0 \le p_\ell \le n$ and $p_0 + p_1 + \dots + p_{m-1} = n$); and
\item
  $\sigma$ is a total quasiorder on $n$ defined by
  $
    (i, j) \in \sigma \text{ iff } \pi_0(f(i)) \le \pi_0(f(j))
  $.
\end{itemize}

A tuple $\tau = (p_0, p_1, \dots, p_{m-1}, \sigma)$ is an \emph{$(n, m)$-multiplicative type}
if $\tau = \tp(f)$ for some embedding $f : n \hookrightarrow A \cdot m$.
Clearly, given finite $n$ and $m$, there are only finitely many $(n, m)$-multiplicative types.
The number $|n / \Boxed{\equiv_\sigma}|$ will be referred to as the \emph{rank of $\tau$}
and denoted by $r(\tau)$.
Note that if $\tp(f) = \tau$ then $r(\tau)$ is equal to the length of the chain~$\val(f)$.
Given finite $n$ and $m$, each embedding $f : n \hookrightarrow A \cdot m$ is uniquely determined
by the pair $(\tp(f), \val(f))$. For an $(n, m)$-multiplicative type $\tau$ let
$$
  \Emb_\tau(n, A \cdot m) = \{ f \in \Emb(n, A \cdot m) : \tp(f) = \tau \}.
$$

\begin{figure}
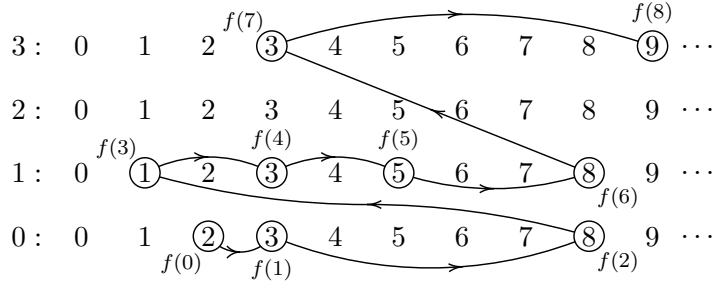

  \newcommand\scrs\scriptsize
  \centering
\begin{pgfpicture}
  \pgfsetxvec{\pgfpoint{\acadpgfunit}{0pt}}
  \pgfsetyvec{\pgfpoint{0pt}{\acadpgfunit}}
  \pgfsetlinewidth{\acadpgflinewidth}
  \pgftransformshift{\pgfpointxy{-3.31158}{-99.0811}}

  \begin{pgfscope}
    \pgfpathellipse{\pgfpointxy{350.0}{250.0}}{\pgfpointxy{23.8122}{0.0}}{\pgfpointxy{0.0}{23.8122}}
    \pgfusepath{stroke}
  \end{pgfscope}
  \begin{pgfscope}
    \pgfpathellipse{\pgfpointxy{450.0}{250.0}}{\pgfpointxy{23.8122}{0.0}}{\pgfpointxy{0.0}{23.8122}}
    \pgfusepath{stroke}
  \end{pgfscope}
  \begin{pgfscope}
    \pgfpathellipse{\pgfpointxy{950.0}{250.0}}{\pgfpointxy{23.8122}{0.0}}{\pgfpointxy{0.0}{23.8122}}
    \pgfusepath{stroke}
  \end{pgfscope}
  \begin{pgfscope}
    \pgfpathellipse{\pgfpointxy{250.0}{350.0}}{\pgfpointxy{23.8122}{0.0}}{\pgfpointxy{0.0}{23.8122}}
    \pgfusepath{stroke}
  \end{pgfscope}
  \begin{pgfscope}
    \pgfpathellipse{\pgfpointxy{450.0}{350.0}}{\pgfpointxy{23.8122}{0.0}}{\pgfpointxy{0.0}{23.8122}}
    \pgfusepath{stroke}
  \end{pgfscope}
  \begin{pgfscope}
    \pgfpathellipse{\pgfpointxy{650.0}{350.0}}{\pgfpointxy{23.8122}{0.0}}{\pgfpointxy{0.0}{23.8122}}
    \pgfusepath{stroke}
  \end{pgfscope}
  \begin{pgfscope}
    \pgfpathellipse{\pgfpointxy{950.0}{350.0}}{\pgfpointxy{23.8122}{0.0}}{\pgfpointxy{0.0}{23.8122}}
    \pgfusepath{stroke}
  \end{pgfscope}
  \begin{pgfscope}
    \pgfpathellipse{\pgfpointxy{450.0}{550.0}}{\pgfpointxy{23.8122}{0.0}}{\pgfpointxy{0.0}{23.8122}}
    \pgfusepath{stroke}
  \end{pgfscope}
  \begin{pgfscope}
    \pgfpathellipse{\pgfpointxy{1050.0}{550.0}}{\pgfpointxy{23.8122}{0.0}}{\pgfpointxy{0.0}{23.8122}}
    \pgfusepath{stroke}
  \end{pgfscope}
  \begin{pgfscope}
    \pgfpathmoveto{\pgfpointxy{927.078}{256.449}}
    \pgfpatharcaxes{74.8313}{90.0}{\pgfpointxy{1250.0}{0.0}}{\pgfpointxy{0.0}{1250.0}}
    \pgfusepath{stroke}
  \end{pgfscope}
  \begin{pgfscope}
    \pgfpathmoveto{\pgfpointxy{272.922}{343.551}}
    \pgfpatharcaxes{254.831}{270.0}{\pgfpointxy{1250.0}{0.0}}{\pgfpointxy{0.0}{1250.0}}
    \pgfusepath{stroke}
  \end{pgfscope}
  \begin{pgfscope}
    \pgfpathmoveto{\pgfpointxy{927.891}{358.844}}
    \pgfpathlineto{\pgfpointxy{472.109}{541.156}}
    \pgfusepath{stroke}
  \end{pgfscope}
  \begin{pgfscope}
    \pgfpathmoveto{\pgfpointxy{367.655}{234.021}}
    \pgfpatharcaxes{238.834}{301.166}{\pgfpointxy{62.5}{0.0}}{\pgfpointxy{0.0}{62.5}}
    \pgfusepath{stroke}
  \end{pgfscope}
  \begin{pgfscope}
    \pgfpathmoveto{\pgfpointxy{472.145}{241.246}}
    \pgfpatharcaxes{249.479}{290.521}{\pgfpointxy{650.0}{0.0}}{\pgfpointxy{0.0}{650.0}}
    \pgfusepath{stroke}
  \end{pgfscope}
  \begin{pgfscope}
    \pgfpathmoveto{\pgfpointxy{428.394}{360.011}}
    \pgfpatharcaxes{68.3513}{111.649}{\pgfpointxy{212.5}{0.0}}{\pgfpointxy{0.0}{212.5}}
    \pgfusepath{stroke}
  \end{pgfscope}
  \begin{pgfscope}
    \pgfpathmoveto{\pgfpointxy{628.394}{360.011}}
    \pgfpatharcaxes{68.3513}{111.649}{\pgfpointxy{212.5}{0.0}}{\pgfpointxy{0.0}{212.5}}
    \pgfusepath{stroke}
  \end{pgfscope}
  \begin{pgfscope}
    \pgfpathmoveto{\pgfpointxy{672.716}{342.86}}
    \pgfpatharcaxes{254.026}{285.974}{\pgfpointxy{462.5}{0.0}}{\pgfpointxy{0.0}{462.5}}
    \pgfusepath{stroke}
  \end{pgfscope}
  \begin{pgfscope}
    \pgfpathmoveto{\pgfpointxy{1027.38}{557.432}}
    \pgfpatharcaxes{72.5504}{107.45}{\pgfpointxy{925.0}{0.0}}{\pgfpointxy{0.0}{925.0}}
    \pgfusepath{stroke}
  \end{pgfscope}
  \begin{pgfscope}
    \pgfpathmoveto{\pgfpointxy{728.441}{206.531}}
    \pgfpatharcaxes{243.143}{274.524}{\pgfpointxy{43.0198}{0.0}}{\pgfpointxy{0.0}{43.0198}}
    \pgfusepath{stroke}
  \end{pgfscope}
  \begin{pgfscope}
    \pgfpathmoveto{\pgfpointxy{751.27}{202.025}}
    \pgfpatharcaxes{94.524}{125.905}{\pgfpointxy{43.0198}{0.0}}{\pgfpointxy{0.0}{43.0198}}
    \pgfusepath{stroke}
  \end{pgfscope}
  \begin{pgfscope}
    \pgfpathmoveto{\pgfpointxy{379.292}{234.607}}
    \pgfpatharcaxes{220.227}{270.0}{\pgfpointxy{27.1231}{0.0}}{\pgfpointxy{0.0}{27.1231}}
    \pgfusepath{stroke}
  \end{pgfscope}
  \begin{pgfscope}
    \pgfpathmoveto{\pgfpointxy{400.0}{225.0}}
    \pgfpatharcaxes{90.0}{139.773}{\pgfpointxy{27.1231}{0.0}}{\pgfpointxy{0.0}{27.1231}}
    \pgfusepath{stroke}
  \end{pgfscope}
  \begin{pgfscope}
    \pgfpathmoveto{\pgfpointxy{622.45}{293.837}}
    \pgfpatharcaxes{59.2966}{90.0}{\pgfpointxy{43.9691}{0.0}}{\pgfpointxy{0.0}{43.9691}}
    \pgfusepath{stroke}
  \end{pgfscope}
  \begin{pgfscope}
    \pgfpathmoveto{\pgfpointxy{600.0}{300.0}}
    \pgfpatharcaxes{270.0}{300.703}{\pgfpointxy{43.9691}{0.0}}{\pgfpointxy{0.0}{43.9691}}
    \pgfusepath{stroke}
  \end{pgfscope}
  \begin{pgfscope}
    \pgfpathmoveto{\pgfpointxy{327.844}{381.9}}
    \pgfpatharcaxes{235.404}{270.0}{\pgfpointxy{39.0217}{0.0}}{\pgfpointxy{0.0}{39.0217}}
    \pgfusepath{stroke}
  \end{pgfscope}
  \begin{pgfscope}
    \pgfpathmoveto{\pgfpointxy{350.0}{375.0}}
    \pgfpatharcaxes{90.0}{124.596}{\pgfpointxy{39.0217}{0.0}}{\pgfpointxy{0.0}{39.0217}}
    \pgfusepath{stroke}
  \end{pgfscope}
  \begin{pgfscope}
    \pgfpathmoveto{\pgfpointxy{527.844}{381.9}}
    \pgfpatharcaxes{235.404}{270.0}{\pgfpointxy{39.0217}{0.0}}{\pgfpointxy{0.0}{39.0217}}
    \pgfusepath{stroke}
  \end{pgfscope}
  \begin{pgfscope}
    \pgfpathmoveto{\pgfpointxy{550.0}{375.0}}
    \pgfpatharcaxes{90.0}{124.596}{\pgfpointxy{39.0217}{0.0}}{\pgfpointxy{0.0}{39.0217}}
    \pgfusepath{stroke}
  \end{pgfscope}
  \begin{pgfscope}
    \pgfpathmoveto{\pgfpointxy{777.642}{331.406}}
    \pgfpatharcaxes{238.026}{270.0}{\pgfpointxy{42.2214}{0.0}}{\pgfpointxy{0.0}{42.2214}}
    \pgfusepath{stroke}
  \end{pgfscope}
  \begin{pgfscope}
    \pgfpathmoveto{\pgfpointxy{800.0}{325.0}}
    \pgfpatharcaxes{90.0}{121.974}{\pgfpointxy{42.2214}{0.0}}{\pgfpointxy{0.0}{42.2214}}
    \pgfusepath{stroke}
  \end{pgfscope}
  \begin{pgfscope}
    \pgfpathmoveto{\pgfpointxy{718.652}{436.046}}
    \pgfpatharcaxes{38.1986}{68.1986}{\pgfpointxy{45.0}{0.0}}{\pgfpointxy{0.0}{45.0}}
    \pgfusepath{stroke}
  \end{pgfscope}
  \begin{pgfscope}
    \pgfpathmoveto{\pgfpointxy{700.0}{450.0}}
    \pgfpatharcaxes{248.199}{278.199}{\pgfpointxy{45.0}{0.0}}{\pgfpointxy{0.0}{45.0}}
    \pgfusepath{stroke}
  \end{pgfscope}
  \begin{pgfscope}
    \pgfpathmoveto{\pgfpointxy{727.568}{606.212}}
    \pgfpatharcaxes{239.042}{270.0}{\pgfpointxy{43.6074}{0.0}}{\pgfpointxy{0.0}{43.6074}}
    \pgfusepath{stroke}
  \end{pgfscope}
  \begin{pgfscope}
    \pgfpathmoveto{\pgfpointxy{750.0}{600.0}}
    \pgfpatharcaxes{90.0}{120.958}{\pgfpointxy{43.6074}{0.0}}{\pgfpointxy{0.0}{43.6074}}
    \pgfusepath{stroke}
  \end{pgfscope}
  \pgftext[at={\pgfpointxy{950.0}{250.0}}]{8}
  \pgftext[at={\pgfpointxy{850.0}{250.0}}]{7}
  \pgftext[at={\pgfpointxy{750.0}{250.0}}]{6}
  \pgftext[at={\pgfpointxy{650.0}{250.0}}]{5}
  \pgftext[at={\pgfpointxy{550.0}{250.0}}]{4}
  \pgftext[at={\pgfpointxy{450.0}{250.0}}]{3}
  \pgftext[at={\pgfpointxy{350.0}{250.0}}]{2}
  \pgftext[at={\pgfpointxy{250.0}{250.0}}]{1}
  \pgftext[at={\pgfpointxy{150.0}{250.0}}]{0}
  \pgftext[at={\pgfpointxy{1125.0}{250.0}}]{$\dots$}
  \pgftext[right,at={\pgfpointxy{88.0}{250.0}}]{$0:$}
  \pgftext[at={\pgfpointxy{950.0}{350.0}}]{8}
  \pgftext[at={\pgfpointxy{850.0}{350.0}}]{7}
  \pgftext[at={\pgfpointxy{750.0}{350.0}}]{6}
  \pgftext[at={\pgfpointxy{650.0}{350.0}}]{5}
  \pgftext[at={\pgfpointxy{550.0}{350.0}}]{4}
  \pgftext[at={\pgfpointxy{450.0}{350.0}}]{3}
  \pgftext[at={\pgfpointxy{350.0}{350.0}}]{2}
  \pgftext[at={\pgfpointxy{250.0}{350.0}}]{1}
  \pgftext[at={\pgfpointxy{150.0}{350.0}}]{0}
  \pgftext[at={\pgfpointxy{1125.0}{350.0}}]{$\dots$}
  \pgftext[right,at={\pgfpointxy{88.0}{350.0}}]{$1:$}
  \pgftext[at={\pgfpointxy{950.0}{450.0}}]{8}
  \pgftext[at={\pgfpointxy{850.0}{450.0}}]{7}
  \pgftext[at={\pgfpointxy{750.0}{450.0}}]{6}
  \pgftext[at={\pgfpointxy{650.0}{450.0}}]{5}
  \pgftext[at={\pgfpointxy{550.0}{450.0}}]{4}
  \pgftext[at={\pgfpointxy{450.0}{450.0}}]{3}
  \pgftext[at={\pgfpointxy{350.0}{450.0}}]{2}
  \pgftext[at={\pgfpointxy{250.0}{450.0}}]{1}
  \pgftext[at={\pgfpointxy{150.0}{450.0}}]{0}
  \pgftext[at={\pgfpointxy{1125.0}{450.0}}]{$\dots$}
  \pgftext[right,at={\pgfpointxy{88.0}{450.0}}]{$2:$}
  \pgftext[at={\pgfpointxy{950.0}{550.0}}]{8}
  \pgftext[at={\pgfpointxy{850.0}{550.0}}]{7}
  \pgftext[at={\pgfpointxy{750.0}{550.0}}]{6}
  \pgftext[at={\pgfpointxy{650.0}{550.0}}]{5}
  \pgftext[at={\pgfpointxy{550.0}{550.0}}]{4}
  \pgftext[at={\pgfpointxy{450.0}{550.0}}]{3}
  \pgftext[at={\pgfpointxy{350.0}{550.0}}]{2}
  \pgftext[at={\pgfpointxy{250.0}{550.0}}]{1}
  \pgftext[at={\pgfpointxy{150.0}{550.0}}]{0}
  \pgftext[at={\pgfpointxy{1125.0}{550.0}}]{$\dots$}
  \pgftext[right,at={\pgfpointxy{88.0}{550.0}}]{$3:$}
  \pgftext[at={\pgfpointxy{1050.0}{250.0}}]{9}
  \pgftext[at={\pgfpointxy{1050.0}{350.0}}]{9}
  \pgftext[at={\pgfpointxy{1050.0}{450.0}}]{9}
  \pgftext[at={\pgfpointxy{1050.0}{550.0}}]{9}
  \pgftext[top,right,at={\pgfpointxy{340.507}{223.575}}]{\scrs$f(0)$}
  \pgftext[top,at={\pgfpointxy{450.0}{214.188}}]{\scrs$f(1)$}
  \pgftext[top,left,at={\pgfpointxy{962.882}{227.16}}]{\scrs$f(2)$}
  \pgftext[bottom,right,at={\pgfpointxy{238.359}{371.767}}]{\scrs$f(3)$}
  \pgftext[bottom,at={\pgfpointxy{450.0}{385.812}}]{\scrs$f(4)$}
  \pgftext[bottom,at={\pgfpointxy{650.0}{385.812}}]{\scrs$f(5)$}
  \pgftext[top,left,at={\pgfpointxy{962.882}{327.16}}]{\scrs$f(6)$}
  \pgftext[bottom,right,at={\pgfpointxy{438.359}{571.767}}]{\scrs$f(7)$}
  \pgftext[bottom,at={\pgfpointxy{1050.0}{585.812}}]{\scrs$f(8)$}
\end{pgfpicture}
  \caption{An embedding $9 \hookrightarrow \omega \cdot 4$}
  \label{fbrd-scat.fig.ex1}
\end{figure}

\begin{EX}\label{fbrd-scat.ex.alpha-cdot-m-1}
  Consider the embedding $f : 9 \hookrightarrow \omega \cdot 4$ given by
  $$
    f = \left(
      \begin{matrix}
        0     & 1     & 2     & 3     & 4     & 5     & 6     & 7     & 8     \\
        (2,0) & (3,0) & (8,0) & (1,1) & (3,1) & (5,1) & (8,1) & (3,3) & (9,3)
      \end{matrix}
    \right)
  $$
  (see Fig.~\ref{fbrd-scat.fig.ex1}). Then $\val(f) = \{1 < 2 < 3 < 5 < 8 < 9\}$,
  while
  $\tp(f) = (3, 4, 0, 2, \sigma)$ where $\sigma = \{3 \prec 0 \prec 1 \equiv 4 \equiv 7 \prec 5 \prec 2 \equiv 6 \prec 8\}$.
  Clearly, $\equiv_\sigma$ has 6 blocks $\{3 | 0 | 1, 4, 7 | 5 | 2, 6 | 8\}$, and the length of the chain $\val(f)$ is~6.
\end{EX}

\begin{EX}\label{fbrd-scat.ex.alpha-cdot-m-2}
  Let $n = 7$ and $m = 5$, and consider the pair $(\tau, V)$ where $V = \{13 < 19 < 25 < 43\}$
  and $\tau = (0, 2, 1, 0, 4, \{3 \prec 1 \prec 0 \equiv 2 \equiv 6 \prec 4 \equiv 5\})$.
  Let us show that there is a unique embedding $f : 7 \hookrightarrow \omega \cdot 5$ such that
  $(\tp(f), \val(f)) = (\tau, V)$. From the sequence $(0, 2, 1, 0, 4)$ we can easily reconstruct
  the levels:
  $$
    f = \left(
      \begin{matrix}
        0       & 1       & 2       & 3       & 4       & 5       & 6 \\
        (?, 1)  & (?, 1)  & (?, 2)  & (?, 4)  & (?, 4)  & (?, 4)  & (?, 4) 
      \end{matrix}
    \right),
  $$
  while $V$ and $\sigma$ provide information on the values: $f(3)$ has the smallest value, so it has to be 13,
  $f(1)$ takes the next value, 19, then come $f(0)$, $f(2)$ and $f(6)$ with the same value 25, and finally
  $f(4)$ and $f(5)$ have the same value 43:
  $$
    f = \left(
      \begin{matrix}
        0       & 1       & 2       & 3       & 4       & 5       & 6 \\
        (25, 1) & (19, 1) & (25, 2) & (13, 4) & (43, 4) & (43, 4) & (25, 4) 
      \end{matrix}
    \right).
  $$
\end{EX}

\begin{LEM}\label{fbrd-scat.lem.1}
  Let $A$ be a countable well-ordered set with finite big Ramsey degrees.
  Fix integers $n \ge 1$ and $m \ge 1$. Then for every $(n, m)$-multiplicative type $\tau$,
  every $k \ge 2$ and every coloring
  $
    \chi : \Emb_\tau(n, A \cdot m) \to k
  $
  there is a $U \subseteq A$ order-isomorphic to $A$ such that
  $$
    |\chi(\Emb_\tau(n, U \cdot m))| \le T(r(\tau), A).
  $$
\end{LEM}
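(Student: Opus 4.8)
The plan is to mimic the structure of the proof of Lemma~\ref{fbrd-scat.lem.1-additive}, but now the relevant ``residue'' of an embedding $f \in \Emb_\tau(n, A \cdot m)$ is not a subchain of fixed finite size but rather the chain $\val(f)$, whose length is exactly $r = r(\tau)$. First I would observe that, for a \emph{fixed} multiplicative type $\tau$, Example~\ref{fbrd-scat.ex.alpha-cdot-m-2} shows that an embedding $f \in \Emb_\tau(n, A \cdot m)$ is completely determined by $\val(f)$: the sequence $(p_0,\dots,p_{m-1})$ recovers the levels of $f(0),\dots,f(n-1)$, and the quasiorder $\sigma$ together with the $r$-element chain $\val(f)$ recovers the values. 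Hence there is a bijection
$$
  \Phi : \Emb_\tau(n, A \cdot m) \longrightarrow \Emb(r, A)
$$
sending $f$ to the embedding $r \hookrightarrow A$ whose image is $\val(f)$ (equivalently, the increasing enumeration of $\val(f)$). This $\Phi$ restricts, for every $U \subseteq A$, to a bijection $\Emb_\tau(n, U \cdot m) \to \Emb(r, U)$, since $\val(f) \subseteq U$ iff $f \in \Emb_\tau(n, U \cdot m)$.

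Given this bijection, the rest is the same transfer argument as before. Fix $k \ge 2$ and a coloring $\chi : \Emb_\tau(n, A \cdot m) \to k$, and define $\chi' : \Emb(r, A) \to k$ by $\chi'(g) = \chi(\Phi^{-1}(g))$. Since $A$ has finite big Ramsey degrees and $r = r(\tau) \le n$ is a fixed finite number (note $r \ge 1$ because $n \ge 1$), there is a $U \subseteq A$ order-isomorphic to $A$ with $|\chi'(\Emb(r, U))| \le T(r, A)$. Because $\Phi$ maps $\Emb_\tau(n, U \cdot m)$ bijectively onto $\Emb(r, U)$ and $\chi = \chi' \circ \Phi$ on this set, we get $\chi(\Emb_\tau(n, U \cdot m)) = \chi'(\Emb(r, U))$, so $|\chi(\Emb_\tau(n, U \cdot m))| \le T(r(\tau), A)$, as required.

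The only point needing genuine care — the main obstacle, such as it is — is verifying cleanly that $\Phi$ is well-defined and bijective, i.e.\ that the pair $(\tau, \val(f))$ really does determine $f$ and that every $r$-element subchain of $A$ arises as $\val(f)$ for exactly one $f \in \Emb_\tau(n, A \cdot m)$; this is precisely the content illustrated in Examples~\ref{fbrd-scat.ex.alpha-cdot-m-1} and~\ref{fbrd-scat.ex.alpha-cdot-m-2}, so I would state it as the reconstruction observation and refer back to those examples rather than re-deriving it in full generality. One should also check the trivial consistency that $\val(f)$ indeed has length $r(\tau)$ for $f \in \Emb_\tau(n, A\cdot m)$, which was already noted in the text right after the definition of the rank. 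Everything else is bookkeeping identical to Lemma~\ref{fbrd-scat.lem.1-additive}.
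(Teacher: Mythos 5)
Your proposal is correct and follows essentially the same route as the paper: the key step in both is the observation that $\val$ gives a bijection $\Emb_\tau(n, A\cdot m) \to \Emb(r(\tau), A)$ restricting to a bijection over any $U \subseteq A$, after which the coloring is transferred and the finite big Ramsey degree of $A$ applied to $\Emb(r(\tau), A)$. The bookkeeping details (well-definedness of the transferred coloring, the identity $\chi = \chi' \circ \val$ on $\Emb_\tau(n, U \cdot m)$) match the paper's argument exactly.
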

\begin{proof}
  Fix $n$, $m$ and $\tau$ as in the formulation of the lemma and let $r = r(\tau)$.
  Recall that each embedding $f : n \hookrightarrow A \cdot m$ is uniquely determined
  by the pair $(\tp(f), \val(f))$. Moreover, given a $v \in \Emb(r, A)$ there is a unique
  embedding $f_v : n \hookrightarrow A \cdot m$ with $\tp(f_v) = \tau$ and $\val(f_v) = v$. Therefore,
  $$
    \val : \Emb_\tau(n, A \cdot m) \to \Emb(r, A)
  $$
  is a bijection.
  Take any $k \ge 2$, a coloring $\chi : \Emb_\tau(n, A \cdot m) \to k$,
  and construct a coloring $\chi' : \Emb(r, A) \to k$ by
  $\chi'(v) = \chi(f_v)$. As we have just seen, this coloring is well defined.
  Since $T(r, A) < \infty$ there is a $U \subseteq A$ order-isomorphic to $A$
  and a set of colors $C \subseteq k$ such that $|C| \le T(r, A)$ and
  $\chi'(\Emb(r, U)) \subseteq C$. But then it is easy to see that
  $$
    \chi(\Emb_\tau(n, U \cdot m)) \subseteq C
  $$
  because $\chi(f) = \chi'(\val(f))$ for all $f \in \Emb_\tau(n, U \cdot m)$.
\end{proof}

\begin{THM}\label{fbrd-scat.thm.alpha-cdot-m}
  Let $\alpha$ be a countable ordinal with finite big Ramsey degrees.
  Then $\alpha \cdot m$ has finite big Ramsey degrees for every integer $m \ge 1$.
\end{THM}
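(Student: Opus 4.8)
The plan is to follow the proof of Theorem~\ref{fbrd-scat.thm.alpha+m} almost verbatim, with Lemma~\ref{fbrd-scat.lem.1} playing the role that Lemma~\ref{fbrd-scat.lem.1-additive} played there: split $\Emb(n, \alpha \cdot m)$ according to the multiplicative type of an embedding, stabilize the colouring one type at a time by passing to successively smaller order-isomorphic copies of $\alpha$, and add up the resulting bounds over the (finitely many) $(n,m)$-multiplicative types. Fix $n \ge 1$ and $m \ge 1$ and let $\tau_0, \ldots, \tau_{t-1}$ enumerate all $(n,m)$-multiplicative types; recall that there are only finitely many of them and that $r(\tau_j) \le n$ for each $j$. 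Given $k \ge 2$ and a colouring $\chi : \Emb(n, \alpha \cdot m) \to k$, apply Lemma~\ref{fbrd-scat.lem.1} to $\chi$ restricted to $\Emb_{\tau_0}(n, \alpha \cdot m)$ to get $U_0 \subseteq \alpha$ order-isomorphic to $\alpha$ with $|\chi(\Emb_{\tau_0}(n, U_0 \cdot m))| \le T(r(\tau_0), \alpha)$, a finite number since $\alpha$ has finite big Ramsey degrees. Then, because each $U_{j-1}$ is order-isomorphic to $\alpha$ (hence has finite big Ramsey degrees, with $T(r, U_{j-1}) = T(r, \alpha)$), apply the lemma inductively for $j = 1, \ldots, t-1$ to $\chi$ restricted to $\Emb_{\tau_j}(n, U_{j-1} \cdot m)$, obtaining $U_j \subseteq U_{j-1}$ order-isomorphic to $\alpha$ with $|\chi(\Emb_{\tau_j}(n, U_j \cdot m))| \le T(r(\tau_j), \alpha)$.

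Now set $U = U_{t-1}$. Since $U \subseteq U_j$ for every $j < t$ and passing to a sub-well-ordered-set does not alter the multiplicative type of an embedding (that type records only the levels of the points and the relative order of their values), we have $\Emb_{\tau_j}(n, U \cdot m) \subseteq \Emb_{\tau_j}(n, U_j \cdot m)$ for each $j$; and these sets cover $\Emb(n, U \cdot m)$ because every embedding has one of the listed types. Hence
$$
  |\chi(\Emb(n, U \cdot m))| \le \sum_{j < t} |\chi(\Emb_{\tau_j}(n, U \cdot m))| \le \sum_{j < t} |\chi(\Emb_{\tau_j}(n, U_j \cdot m))| \le \sum_{j < t} T(r(\tau_j), \alpha).
$$
As $U \cdot m$ is order-isomorphic to $\alpha \cdot m$, this yields $T(n, \alpha \cdot m) \le \sum_{\tau} T(r(\tau), \alpha) < \infty$, the sum running over all $(n,m)$-multiplicative types $\tau$.

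All the substance is already contained in Lemma~\ref{fbrd-scat.lem.1}, which trades the Ramsey problem for a single multiplicative type $\tau$ for the Ramsey problem of $r(\tau)$-element chains in $\alpha$; after that the theorem is bookkeeping, and the only facts that genuinely need to be invoked are the finiteness of the set of $(n,m)$-multiplicative types (with all ranks $\le n$) and the monotonicity of the assignment $U \mapsto \Emb_\tau(n, U \cdot m)$ under passing to suborders $U$. I do not expect any serious obstacle. (Refining the same counting argument by tracking only the multiplicative types realized cofinally in $\alpha \cdot m$, together with a matching lower-bound colouring, is what produces the exact value $T(n, \omega \cdot m) = m^n$ stated in the introduction; but for this theorem plain finiteness suffices.)
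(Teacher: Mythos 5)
Your proposal is correct and follows essentially the same route as the paper's own proof: partition $\Emb(n,\alpha\cdot m)$ by $(n,m)$-multiplicative type, stabilize one type at a time via Lemma~\ref{fbrd-scat.lem.1} on a descending chain of copies of $\alpha$, and sum the bounds $T(r(\tau_j),\alpha)$ over the finitely many types. No issues to report.
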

\begin{proof}
  Fix $n \ge 1$ and $m \ge 1$ and let $\tau_0$, $\tau_1$, \dots, $\tau_{t-1}$ be all the $(n, m)$-multiplicative types.
  We are going to show that
  $$
    T(n, \alpha \cdot m) \le \sum_{j < t} T(r(\tau_j), \alpha) < \infty.
  $$
  Take any $k \ge 2$ and any coloring $\chi : \Emb(n, \alpha \cdot m) \to k$.
  By Lemma~\ref{fbrd-scat.lem.1} there is a $U_0 \subseteq \alpha$ order-isomorphic to $\alpha$ such that
  $$
    |\chi(\Emb_{\tau_0}(n, U_0 \cdot m))| \le T(r(\tau_0), \alpha).
  $$
  By the same lemma for each $j \in \{1, \ldots, t-1\}$
  we can inductively construct a $U_j \subseteq U_{j-1}$ order-isomorphic to $U_{j-1}$ (and hence to $\alpha$) such that
  $$
    |\chi(\Emb_{\tau_j}(n, U_j \cdot m))| \le T(r(\tau_j), \alpha).
  $$
  Then, having in mind that $U_{t-1} \subseteq U_j$,
  \begin{align*}
    |\chi(\Emb(n, U_{t-1} \cdot m))|
    &= \sum_{j < t} |\chi(\Emb_{\tau_j}(n, U_{t-1} \cdot m))|\\
    &\le \sum_{j < t} |\chi(\Emb_{\tau_j}(n, U_{j} \cdot m))| \le \sum_{j < t} T(r(\tau_j), \alpha). \qedhere
  \end{align*}
\end{proof}

Another consequence of Lemma~\ref{fbrd-scat.lem.1} is the following product Ramsey theorem.

\begin{THM}\label{fbrd-scat.thm.prod-ramsey}
  Let $\alpha$ be a countable ordinal with finite big Ramsey degrees.
  For any choice of integers $s \ge 1$ and $n_0, \ldots, n_{s-1} \ge 1$ there is an integer
  $t = t(\alpha, n_0, \ldots, n_{s-1})$ such that for every $k \ge 2$ and for every coloring
  $$
    \chi : \Emb(n_0, \alpha) \times \ldots \times \Emb(n_{s-1}, \alpha) \to k
  $$
  there is a $U \subseteq \alpha$ order-isomorphic to $\alpha$ such that
  $$
    |\chi(\Emb(n_0, U) \times \ldots \times \Emb(n_{s-1}, U))| \le t.
  $$
\end{THM}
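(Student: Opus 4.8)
The plan is to encode an $s$-tuple of embeddings of finite chains into $\alpha$ as a single embedding of the finite chain $N = n_0 + n_1 + \ldots + n_{s-1}$ into $\alpha \cdot s$, using one level per factor, and then to run the same type-by-type nesting argument as in the proof of Theorem~\ref{fbrd-scat.thm.alpha-cdot-m}.

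First I would set up the encoding. Working in $\alpha \cdot s = (\alpha \times s, \alex)$, note that the level (the second coordinate) dominates in $\alex$, so along any embedding $g : N \hookrightarrow \alpha \cdot s$ the level of $g(i)$ is nondecreasing in $i$; hence once the level-counts $(p_0, \ldots, p_{s-1})$ are fixed to equal $(n_0, \ldots, n_{s-1})$, the first $n_0$ elements of $N$ are forced onto level $0$, the next $n_1$ onto level $1$, and so on, while the level-$\ell$ values of $g$ form an arbitrary embedding $n_\ell \hookrightarrow \alpha$ subject to no constraint from the other levels. So the rule that sends $(f_0, \ldots, f_{s-1})$ to the unique $g$ whose restriction to the $\ell$th block of $N$ is $i \mapsto (f_\ell(i), \ell)$ should be a bijection
$$
  \Phi : \Emb(n_0, \alpha) \times \ldots \times \Emb(n_{s-1}, \alpha) \to \bigcup_{j < q} \Emb_{\tau_j}(N, \alpha \cdot s),
$$
where $\tau_0, \ldots, \tau_{q-1}$ list all $(N, s)$-multiplicative types with level-count $(n_0, \ldots, n_{s-1})$ (there are finitely many of them), and the union on the right is disjoint. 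The same rule read inside $U \cdot s$ should restrict, for every $U \subseteq \alpha$, to a bijection $\Emb(n_0, U) \times \ldots \times \Emb(n_{s-1}, U) \to \bigcup_{j<q}\Emb_{\tau_j}(N, U \cdot s)$.

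Then, given $k \ge 2$ and a coloring $\chi$ of the product, I would transport it to $\hat\chi = \chi \circ \Phi^{-1}$ on $\bigcup_{j<q}\Emb_{\tau_j}(N, \alpha \cdot s)$ and apply Lemma~\ref{fbrd-scat.lem.1} once for each $\tau_j$: starting from $\hat\chi$ restricted to $\Emb_{\tau_0}(N, \alpha \cdot s)$ one gets $U_0 \subseteq \alpha$ order-isomorphic to $\alpha$ with $|\hat\chi(\Emb_{\tau_0}(N, U_0 \cdot s))| \le T(r(\tau_0), \alpha)$, and then inductively (applying the lemma inside $U_{j-1}$, which is order-isomorphic to $\alpha$ and so has finite big Ramsey degrees) one gets $U_j \subseteq U_{j-1}$ order-isomorphic to $\alpha$ with $|\hat\chi(\Emb_{\tau_j}(N, U_j \cdot s))| \le T(r(\tau_j), \alpha)$. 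With $U = U_{q-1}$, using $U \subseteq U_j$ for all $j$ and the disjointness of the union, this yields
$$
  |\chi(\Emb(n_0, U) \times \ldots \times \Emb(n_{s-1}, U))| \le \sum_{j<q} |\hat\chi(\Emb_{\tau_j}(N, U \cdot s))| \le \sum_{j<q} T(r(\tau_j), \alpha),
$$
so one may take $t(\alpha, n_0, \ldots, n_{s-1}) = \sum_{j<q} T(r(\tau_j), \alpha)$, which is finite because $q < \omega$, each $r(\tau_j) \le N < \omega$, and $\alpha$ has finite big Ramsey degrees.

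The only step that needs care — and which I regard as the crux — is verifying that $\Phi$ is the claimed bijection: that fixing the level-counts pins down the level of each coordinate of every embedding $N \hookrightarrow \alpha \cdot s$, and that the per-level data then varies freely and independently, so that $\Phi$ really matches the product with the disjoint union of the $\Emb_{\tau_j}$'s. Both facts are immediate from the shape of $\alex$ on $\alpha \times s$, so I do not expect a genuine obstacle; once the encoding is in place the argument is verbatim the nesting already used for Theorem~\ref{fbrd-scat.thm.alpha-cdot-m}.
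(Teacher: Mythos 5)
Your proposal is correct and follows essentially the same route as the paper: the same encoding $\Phi(f_0,\ldots,f_{s-1}) = f_0 + \ldots + f_{s-1}$ onto the union of $\Emb_\tau(N, \alpha\cdot s)$ over the finitely many $(N,s)$-multiplicative types with level-counts $(n_0,\ldots,n_{s-1})$, the same iterated application of Lemma~\ref{fbrd-scat.lem.1}, and the same bound $t = \sum_\tau T(r(\tau),\alpha)$. The bijectivity of $\Phi$ that you flag as the crux is exactly what the paper asserts (and leaves as an easy verification), so there is nothing further to add.
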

\begin{proof}
  Let $N = n_0 + \ldots + n_{s-1}$ and let $Q$ be the set of all the $(N, s)$-multiplicative types of the form
  $(n_0, \ldots, n_{s-1}, \sigma)$ where $\sigma$ is arbitrary. Note that there are finitely many possibilities to choose $\sigma$
  so $Q$ is finite. Put
  $
    t = \sum_{\tau \in Q} T(r(\tau), \alpha)
  $.
  
  Take any $k \ge 2$ and any coloring
  $
    \chi : \Emb(n_0, \alpha) \times \ldots \times \Emb(n_{s-1}, \alpha) \to k
  $.
  It is easy to see that
  $$
    \Phi : \Emb(n_0, \alpha) \times \ldots \times \Emb(n_{s-1}, \alpha) \to \bigcup_{\tau \in Q} \Emb_\tau(N, \alpha \cdot s)
  $$
  given by $\Phi(f_0, \ldots, f_{s-1}) = f_0 + \ldots + f_{s-1}$ is a bijection. So, let us define
  $$
    \chi' : \bigcup_{\tau \in Q} \Emb_\tau(N, \alpha \cdot s) \to k
  $$
  by $\chi'(f) = \chi(\Phi^{-1}(f))$. We can now repeat the argument used in the proof of
  Theorem~\ref{fbrd-scat.thm.alpha-cdot-m} to show that there is a $U \subseteq \alpha$
  order-isomorphic to $\alpha$ such that
  $$
    \Big|\chi'\Big(\bigcup_{\tau \in Q} \Emb_\tau(N, U \cdot s)\Big)\Big| \le
    \sum_{\tau \in Q} T(r(\tau), \alpha) = t.
  $$
  Finally, note that the following restriction of $\Phi$:
  $$
    \Phi' : \Emb(n_0, U) \times \ldots \times \Emb(n_{s-1}, U) \to \bigcup_{\tau \in Q} \Emb_\tau(N, U \cdot s)
  $$
  (which takes $f$ to $\Phi(f)$, of course) is well-defined and a bijection. Therefore,
  $$
    \chi(\Emb(n_0, U) \times \ldots \times \Emb(n_{s-1}, U)) = \chi'\Big(\bigcup_{\tau \in Q} \Emb_\tau(N, U \cdot s)\Big),
  $$
  whence $|\chi(\Emb(n_0, U) \times \ldots \times \Emb(n_{s-1}, U))| \le t$.
\end{proof}

We conclude the section by specializing the above results for $\omega$.
Note, first, that due to the infinite version of the Ramsey's Theorem Lemma~\ref{fbrd-scat.lem.1} takes the following form

\begin{COR}\label{fbrd-scat.cor.1-omega}
  Fix integers $n \ge 1$ and $m \ge 1$.
  For every $(n, m)$-multi\-pli\-ca\-tive type $\tau$, every $k \ge 2$ and every coloring
  $
    \chi : \Emb_\tau(n, \omega \cdot m) \to k
  $
  there is an infinite $U \subseteq \omega$ such that
  $
    |\chi(\Emb_\tau(n, U \cdot m))| = 1
  $. \qednoproof
\end{COR}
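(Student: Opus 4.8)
The plan is to read this corollary off directly from Lemma~\ref{fbrd-scat.lem.1} by specializing to $A = \omega$. First I would recall two elementary facts about $\omega$: every infinite subset $U \subseteq \omega$ is order-isomorphic to $\omega$, so in this setting ``order-isomorphic to $A$'' just means ``infinite''; and, by the infinite version of Ramsey's Theorem together with the convention $T(0, \omega) = 1$, we have $T(r, \omega) = 1$ for every $r \ge 0$. In particular $\omega$ is a countable well-ordered set with finite big Ramsey degrees, so Lemma~\ref{fbrd-scat.lem.1} is applicable to it.

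Next, fixing $n \ge 1$, $m \ge 1$, an $(n, m)$-multiplicative type $\tau$, an integer $k \ge 2$ and a coloring $\chi : \Emb_\tau(n, \omega \cdot m) \to k$, I would invoke Lemma~\ref{fbrd-scat.lem.1} with $A = \omega$ to produce an infinite $U \subseteq \omega$ with
$$
  |\chi(\Emb_\tau(n, U \cdot m))| \le T(r(\tau), \omega) = 1.
$$

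Finally, to promote the inequality ``$\le 1$'' to the equality ``$= 1$'' asserted in the statement, I would check that $\Emb_\tau(n, U \cdot m) \ne \varnothing$. Since $n \ge 1$, every total quasiorder on $n$ has at least one block, so $r(\tau) \ge 1$; and the map $\val : \Emb_\tau(n, U \cdot m) \to \Emb(r(\tau), U)$ used in the proof of Lemma~\ref{fbrd-scat.lem.1} is a bijection onto $\Emb(r(\tau), U)$, which is nonempty because $U$ is infinite. Hence $|\chi(\Emb_\tau(n, U \cdot m))| = 1$, as claimed. There is no real obstacle here; the only point that merits an explicit word is this last nonemptiness remark, needed precisely because the corollary claims exactly one color rather than at most one.
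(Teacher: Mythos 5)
Your proposal is correct and matches the paper's own treatment: the corollary is obtained exactly by specializing Lemma~\ref{fbrd-scat.lem.1} to $A = \omega$, where $T(r(\tau), \omega) = 1$ by the infinite Ramsey theorem and ``order-isomorphic to $\omega$'' means ``infinite.'' Your extra remark that $\Emb_\tau(n, U \cdot m) \ne \varnothing$ (via the $\val$ bijection) is a sensible touch to justify the stated equality rather than a mere inequality, but it does not change the argument.
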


An $(n, m)$-multiplicative type $\tau$ is \emph{strict} if $\sigma$ is a chain on $n$,
or, equivalently, if $\equiv_\sigma$ is the trivial relation $\{(i,i) : i \in n\}$.
Let $\Stp(n, m)$ denote the set of all the strict $(n, m)$-multiplicative types.

\begin{LEM}\label{fbrd-scat.lem.n1}
  $|\Stp(n,m)| = m^n$.
\end{LEM}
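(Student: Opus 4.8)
The plan is to identify $\Stp(n,m)$ with a concrete set of combinatorial data and then count that set by the multinomial theorem. First I would note that every embedding $f : n \hookrightarrow A \cdot m$ is order-preserving, so the levels $\pi_1(f(0)) \le \ldots \le \pi_1(f(n-1))$ are weakly increasing; hence the level part $(p_0, \ldots, p_{m-1})$ of $\tp(f)$ already pins down the level function of $f$, partitioning $n$ into the consecutive ``level blocks'' $B_0, \ldots, B_{m-1}$ with $|B_\ell| = p_\ell$ (some of which may be empty). When $\tp(f)$ is strict, $\sigma$ is a linear order on $n$, and order-preservation of $f$ forces $\sigma$ to agree with the natural order of $n$ on each $B_\ell$: if $i < j$ lie in the same block then $f(i)$ and $f(j)$ sit at the same level, so $f(i) \alex f(j)$ can only hold because $\pi_0(f(i)) < \pi_0(f(j))$, i.e. $i \prec_\sigma j$. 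Conversely, for any sequence $(p_0, \ldots, p_{m-1})$ of non-negative integers summing to $n$ and any linear order $\sigma$ on $n$ that is increasing on each of the resulting blocks, the map $f$ sending $i$ to $(|\{i' \in n : i' \prec_\sigma i\}|, \ell(i))$, where $\ell(i)$ is the index of the block containing $i$, is an embedding $n \hookrightarrow \omega \cdot m$ with $\tp(f) = (p_0, \ldots, p_{m-1}, \sigma)$. So $\Stp(n,m)$ is in bijection with the set of pairs consisting of such a sequence $(p_0, \ldots, p_{m-1})$ together with such a $\sigma$.

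With this description in hand the count is immediate. For a fixed sequence $(p_0, \ldots, p_{m-1})$, a linear order on $n$ that is increasing on each block is the same thing as a shuffle of the $m$ increasing sequences $B_0, \ldots, B_{m-1}$, and there are exactly $\binom{n}{p_0, \ldots, p_{m-1}} = n!/(p_0!\cdots p_{m-1}!)$ of these. Summing over all admissible sequences,
$$
  |\Stp(n,m)| = \sum_{p_0 + \ldots + p_{m-1} = n} \binom{n}{p_0, \ldots, p_{m-1}} = m^n
$$
by the multinomial theorem evaluated at $x_0 = \ldots = x_{m-1} = 1$.

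The step I expect to need the most care is the ``conversely'' direction above — checking that every formal tuple $(p_0, \ldots, p_{m-1}, \sigma)$ with $\sigma$ block-increasing is genuinely realized as $\tp(f)$ for an actual embedding, so that nothing is over- or under-counted. This is exactly what the explicit $f$ displayed above does: its order-preservation splits into the case of two indices in different blocks (handled by weak monotonicity of the block indices) and the case of two indices in the same block (handled precisely by the hypothesis that $\sigma$ is increasing on that block), and one then reads off that its level part is $(p_0, \ldots, p_{m-1})$ and its induced quasiorder is $\sigma$. If one prefers to sidestep the multinomial identity, an equally short alternative is induction on $m$: classifying a strict $(n, m+1)$-type by the number $p$ of points at the top level (necessarily the $p$ largest indices, ordered among themselves by $\prec_\sigma$ in the natural way) and by how their values interleave with the other $n - p$ values yields $|\Stp(n, m+1)| = \sum_{p=0}^{n} \binom{n}{p}\, |\Stp(n-p, m)|$, and since $|\Stp(n, 1)| = 1$ the binomial theorem gives $|\Stp(n,m)| = m^n$.
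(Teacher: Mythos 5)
Your proof is correct and rests on essentially the same identification as the paper's: a strict $(n,m)$-multiplicative type is determined by recording, for each point in value order, which level it occupies, i.e.\ by a shuffle of the $m$ level blocks. The only difference is cosmetic --- the paper exhibits a single explicit bijection with the $m^n$ words of length $n$ over the alphabet $\{0,\dots,m-1\}$, whereas you count the fibers over each composition $(p_0,\dots,p_{m-1})$ by multinomial coefficients and then invoke the multinomial theorem.
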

\begin{proof}
  Every strict $(n,m)$-multiplicative type $\tau = (p_0, p_1, \dots, p_{m-1}, \{i_0 \prec i_1 \prec \dots \prec i_{n-1}\})$
  can be represented by a word of length $n$ over the alphabet $m = \{0, 1, \dots, m-1\}$ as follows: in the sequence
  $(i_0, i_1, \dots, i_{n-1})$ replace $0, \dots, p_0-1$ with 0 if $p_0 > 0$,
  then replace $p_0, \dots, p_0+p_1-1$ with 1 if $p_1 > 0$,
  then replace $p_0+p_1, \dots, p_0+p_1+p_2-1$ with 2 if $p_2 > 0$, and so on.
  
  For example, for $n = 7$, $m = 4$ and $\tau = (2, 0, 4, 1, \{2 \prec 6 \prec 0 \prec 3 \prec 4 \prec 1 \prec 5\})$
  the sequence $(2, 6, 0, 3, 4, 1, 5)$ is transformed by replacing 0 and 1 with 0, then 2, 3, 4 and 5 with 2, and 6 by 3
  to get the word $2302202$.
  
  Conversely, any word $w$ of length $n$ over the alphabet $m = \{0, 1, \dots, m-1\}$ uniquely determines a strict
  $(n, m)$-multiplicative type.
  First, let $p_i$ be the number of occurrences of letter $i$ in $w$, $i < m$. Then transform  $w$ into a linear ordering of
  $n$ as follows: if $p_0 > 0$ replace all the 0's by integers $0$, $1$, \dots, $p_0-1$ going from left to right,
  then if $p_1 > 0$ replace all the 1's by integers $p_0$, $p_0+1$, \dots, $p_0+p_1-1$ going from left to right, and so on.
  For example, starting from $w = 2302202$ we get first $p_0 = 2$, $p_1 = 0$, $p_2 = 4$ and $p_3 = 1$, and then to get $\sigma$
  we replace the two 0's with 0 and 1 (from left to right), then the four 2's by 2, 3, 4 and 5, and finally the remaining 3 by 6 to get
  $2 \prec 6 \prec 0 \prec 3 \prec 4 \prec 1 \prec 5$.
  This correspondence is clearly bijective, which proves the claim.
\end{proof}

\begin{THM}\label{fbrd-scat.thm.omega.m}
  $T(n, \omega \cdot m) = m^n$, for all $n \ge 1$ and $m \ge 1$.
\end{THM}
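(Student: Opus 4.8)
The plan is to establish both inequalities separately. For the upper bound $T(n,\omega\cdot m)\le m^n$, I would refine the argument of Theorem~\ref{fbrd-scat.thm.alpha-cdot-m} by replacing the crude count over all $(n,m)$-multiplicative types with a sharper bookkeeping. The key observation is that when $\alpha=\omega$, Corollary~\ref{fbrd-scat.cor.1-omega} tells us that on each $\Emb_\tau(n,\omega\cdot m)$ the coloring can be made constant on some infinite $U\subseteq\omega$ (not merely reduced to $T(r(\tau),\omega)$ colors, which is also $1$). So after running the inductive shrinking of $\omega=U_0\supseteq U_1\supseteq\cdots$ through all types, the total number of colors on $\Emb(n,U\cdot m)$ is at most the number of $(n,m)$-multiplicative types $\tau$ — but we can do better by noting that once $U$ is infinite, the union $\bigcup_\tau\Emb_\tau(n,U\cdot m)=\Emb(n,U\cdot m)$ and each non-strict type's embeddings are "degenerate": a non-strict $\tau$ has $r(\tau)<n$, and its embeddings use strictly fewer than $n$ distinct values. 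The cleaner route is to count strict types only. I would argue that it suffices to control $\chi$ on $\bigcup_{\tau\in\Stp(n,m)}\Emb_\tau(n,\omega\cdot m)$: any embedding of a non-strict type, having $r(\tau)$ distinct values, is obtained by "collapsing" coordinates, and on an infinite $U$ every such embedding's color is forced to agree with the color of some strict-type embedding into $U\cdot m$ (extend the $r(\tau)$-element value set to an $n$-element one inside $U$, using that $U$ has order type $\omega$). Running the induction over the $m^n$ strict types alone and invoking Corollary~\ref{fbrd-scat.cor.1-omega} at each step yields $|\chi(\Emb(n,U\cdot m))|\le|\Stp(n,m)|=m^n$ by Lemma~\ref{fbrd-scat.lem.n1}.

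For the lower bound $T(n,\omega\cdot m)\ge m^n$, I would exhibit a coloring with no better oligochromatic copy. Take $k=m^n$ and fix a bijection between $\Stp(n,m)$ and $m^n$; define $\chi^*:\Emb(n,\omega\cdot m)\to m^n$ by first sending $f$ to a strict type: given $f$ with multiplicative type $(p_0,\dots,p_{m-1},\sigma)$, "break ties" in $\sigma$ arbitrarily but canonically (say, by the natural order on the index set $n$) to obtain a strict type $\widehat\tau(f)\in\Stp(n,m)$, and set $\chi^*(f)$ to be its code. I then need: for every infinite $U\subseteq\omega$, the map $f\mapsto\widehat\tau(f)$ is surjective from $\Emb(n,U\cdot m)$ onto $\Stp(n,m)$, so that $|\chi^*(\Emb(n,U\cdot m))|=m^n$. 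This is immediate — given any strict type $\tau=(p_0,\dots,p_{m-1},\{i_0\prec\cdots\prec i_{n-1}\})$, pick $n$ distinct elements of $U$ (possible since $U$ is infinite) and build the unique embedding with that value-tuple and the prescribed levels; its tie-broken type is exactly $\tau$. Hence no infinite $U$ witnesses fewer than $m^n$ colors, so $T(n,\omega\cdot m)\ge m^n$.

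The main obstacle, I expect, is the reduction to strict types in the upper bound: one must verify carefully that controlling $\chi$ on strict-type embeddings into $U\cdot m$ genuinely controls it on \emph{all} of $\Emb(n,U\cdot m)$. The point is that a non-strict embedding $f$ with value chain $\val(f)$ of length $r<n$ sits inside $U\cdot m$; since $U$ has order type $\omega$, we can choose a strict-type embedding $g\in\Emb(n,U\cdot m)$ "arbitrarily close above" $f$ in a way that forces $\chi(f)=\chi(g)$ — but this does not follow from $\chi$ being arbitrary, so instead the correct formulation is: do the inductive shrinking over \emph{all} $(n,m)$-types as in Theorem~\ref{fbrd-scat.thm.alpha-cdot-m}, obtaining $|\chi(\Emb(n,U\cdot m))|\le\#\{(n,m)\text{-types}\}$, which is larger than $m^n$; to sharpen it to $m^n$ one observes that on the final infinite $U$, Corollary~\ref{fbrd-scat.cor.1-omega} gives $|\chi(\Emb_\tau(n,U\cdot m))|=1$ for each $\tau$, and that the $\Emb_\tau$ for non-strict $\tau$ are contained (after a further single application of the infinite Ramsey theorem to the finitely many "collapse patterns") in the colors already attained by strict types. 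Making this last containment precise — essentially, that a collapsed embedding inherits its color from an uncollapsed one because we can always separate coinciding values inside an infinite subset of $\omega$ and then shrink once more — is the delicate step; everything else is a repetition of arguments already in the excerpt.
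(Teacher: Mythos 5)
Your lower bound is essentially correct and is in substance the paper's own argument, though you should quantify over \emph{all} subchains of $\omega\cdot m$ of order type $\omega\cdot m$ --- these have the form $U_0+U_1+\dots+U_{m-1}$ with the $U_\ell\subseteq\omega$ infinite and not necessarily equal --- rather than only the diagonal copies $U\cdot m$; your surjectivity argument extends to such copies without change. The upper bound, however, has a genuine gap, and it sits exactly at the step you yourself flag as delicate. The claim that the colors attained on non-strict-type embeddings are contained in the colors attained on strict-type embeddings --- whether ``forced'' directly, or secured by a further application of the infinite Ramsey theorem to ``collapse patterns'' --- is false. Consider $\chi(f)=\tp(f)$, the coloring by full multiplicative type. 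It is already constant on each $\Emb_\tau(n,\omega\cdot m)$, so no further shrinking of $U$ changes anything; and on any diagonal copy $U\cdot m$ with $U$ infinite, \emph{every} $(n,m)$-multiplicative type, strict or not, is realized. Hence $|\chi(\Emb(n,U\cdot m))|$ equals the total number of $(n,m)$-multiplicative types, which exceeds $m^n$ already for $n=m=2$ (five types versus four strict ones). So as long as you only search for oligochromatic copies of the form $U\cdot m$, the bound $m^n$ is simply unattainable, and no containment of non-strict colors in strict colors can rescue it.

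The missing idea is that the witnessing copy of $\omega\cdot m$ must be chosen with \emph{pairwise disjoint} level sets. After the inductive shrinking over all $(n,m)$-multiplicative types produces an infinite $S=\{s_0<s_1<s_2<\dots\}$ with $|\chi(\Emb_\tau(n,S\cdot m))|\le 1$ for every type $\tau$, set $U_\ell=\{s_\ell<s_{m+\ell}<s_{2m+\ell}<\dots\}$ for $\ell<m$ and pass to the copy $U_0+U_1+\dots+U_{m-1}\subseteq S\cdot m$. Since no value occurs on two distinct levels of this copy, an embedding into it can never identify two values, so no non-strict type is realized in it at all; only the at most $m^n$ colors carried by the strict types survive, giving $|\chi(\Emb(n,U_0+\dots+U_{m-1}))|\le|\Stp(n,m)|=m^n$. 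This single extra step closes the gap; the rest of your outline then goes through.
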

\begin{proof}
  Having Lemma~\ref{fbrd-scat.lem.n1} in mind it suffices to show that
  $T(n, \omega \cdot m) = |\Stp(n, m)|$.
  Let $\tau_0$, $\tau_1$, \dots, $\tau_{t-1}$ be all the $(n,m)$-multiplicative types (not necessarily
  strict).

  Let us first show that $T(n, \omega \cdot m) \le |\Stp(n, m)|$.
  Take any $k \ge 2$ and any coloring $\chi : \Emb(n, \omega \cdot m) \to k$.
  By Corollary~\ref{fbrd-scat.cor.1-omega} there is an infinite $S_0 \subseteq \omega$ such that
  $|\chi(\Emb_{\tau_0}(n, S_0 \cdot m))| = 1$.
  By the same for each $j \in \{1, \ldots, t-1\}$ we can inductively construct an infinite $S_j \subseteq S_{j-1}$ such that
  $|\chi(\Emb_{\tau_j}(n, S_j \cdot m))| = 1$. So,
  $$
    |\chi(\Emb(n, S_{t-1} \cdot m))| \le t.
  $$
  Let $S_{t-1} = \{s_0 < s_1 < s_2 < \dots \}$. Define $U_0$, $U_1$, \dots, $U_{m-1}$ as follows:
  \begin{align*}
    U_0 &= \{s_0 < s_m < s_{2m} < \dots \},\\
    U_1 &= \{s_1 < s_{m+1} < s_{2m+1} < \dots \},\\
        &\vdots\\
    U_{m-1} &= \{s_{m-1} < s_{2m-1} < s_{3m-1} < \dots \}.
  \end{align*}
  Then for each $i < t$ we have that
  $$
    |\chi(\Emb_{\tau_{i}}(n, U_0 + U_1 + \dots + U_{m-1}))| \le 1,
  $$
  because $U_0 + U_1 + \dots + U_{m-1} \subseteq S_{t-1} \cdot m$. Moreover,
  if the type $\tau_i$ is not strict then
  $$
    |\chi(\Emb_{\tau_{i}}(n, U_0 + U_1 + \dots + U_{t-1}))| = 0
  $$
  because there do not exist two identical values on different levels in
  $U_0 + U_1 + \dots + U_{m-1}$. Hence,
  $$
    |\chi(\Emb(n, U_0 + U_1 + \dots + U_{m-1}))| \le |\Stp(n,m)|.
  $$

  To conclude the proof let us show that $T(n, \omega \cdot m) \ge |\Stp(n, m)|$.
  Let $\Stp(n, m) = \{\tau^*_0, \dots, \tau^*_{s-1}\}$ where $s = |\Stp(n, m)|$.
  Define the coloring $\chi^* : \Emb(n, \omega \cdot n) \to s$ by
  $$
    \chi^*(f) = \begin{cases}
      i, & \tp(f) = \tau^*_i,\\
      0, & \text{otherwise (i.e. $\tp(f)$ is not strict).}
    \end{cases}
  $$
  It is obvious that for arbitrary infinite $U_0 \subseteq \omega$,
  $U_1 \subseteq \omega$, \dots, $U_{m-1} \subseteq \omega$ we have that
  $$
    |\chi^*(\Emb(n, U_0 + U_1 + \dots + U_{m-1}))| = s
  $$
  because all the strict types are realized in $\Emb(n, U_0 + U_1 + \dots + U_{m-1})$.
\end{proof}

By specializing Theorem~\ref{fbrd-scat.thm.prod-ramsey} to $\omega$ we get the following
infinite version of the Product Ramsey Theorem,
where $\Stirling nk$ denotes the Stirling number of the second kind.

\begin{COR}\label{fbrd-scat.cor.inf-prod-ramsey}
  Fix integers $s \ge 1$ and $n_0, \ldots, n_{s-1} \ge 1$. Then for every $k \ge 2$ and for every coloring
  $
    \chi : \binom\omega{n_0} \times \ldots \times \binom\omega{n_{s-1}} \to k
  $
  there is an infinite $U \subseteq \omega$ such that
  $
    \left|\chi\left(\binom U{n_0} \times \ldots \times \binom U{n_{s-1}}\right)\right| \le
    \sum_{j=1}^N j! \Stirling Nj,
  $
  where $N = n_0 + n_1 + \ldots + n_{s-1}$.

  This upper bound is tight in the following sense: for $k = \sum_{j=1}^N j! \Stirling Nj$ there is a coloring
  $
    \chi^* : \binom\omega{n_0} \times \ldots \times \binom\omega{n_{s-1}} \to k
  $
  such that for every infinite $U \subseteq \omega$ we have that
  $
    \left|\chi^*\left(\binom U{n_0} \times \ldots \times \binom U{n_{s-1}}\right)\right| =
    \sum_{j=1}^N j! \Stirling Nj.
  $
\end{COR}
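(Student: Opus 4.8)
The plan is to deduce the corollary from Theorem~\ref{fbrd-scat.thm.prod-ramsey} by specialising it to $\alpha=\omega$ and then supplying the attendant count. First I would apply Theorem~\ref{fbrd-scat.thm.prod-ramsey} with $\alpha=\omega$: for every $k\ge 2$ and every colouring $\chi:\Emb(n_0,\omega)\times\ldots\times\Emb(n_{s-1},\omega)\to k$ there is an infinite $U\subseteq\omega$ with $|\chi(\Emb(n_0,U)\times\ldots\times\Emb(n_{s-1},U))|\le t(\omega,n_0,\ldots,n_{s-1})$. Reading the value of $t$ off the proof of that theorem, $t(\omega,n_0,\ldots,n_{s-1})=\sum_{\tau\in Q}T(r(\tau),\omega)$, where $Q$ is the set of all $(N,s)$-multiplicative types of the form $(n_0,\ldots,n_{s-1},\sigma)$; since $T(r,\omega)=1$ for every $r$ by the infinite version of Ramsey's theorem, this sum is just $|Q|$. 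Identifying an $n$-element subchain of $\omega$ with the embedding that lists its elements increasingly (so that $\Emb(n_\ell,U)$ reads as $\binom{U}{n_\ell}$), this yields the oligochromatic statement with bound $|Q|$, and everything is reduced to estimating $|Q|$.

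For the upper bound I would note that a type $\tau=(n_0,\ldots,n_{s-1},\sigma)\in Q$ is completely determined by its total quasiorder $\sigma$ on the $N$-element chain $N=\{0,\ldots,N-1\}$, so $|Q|$ does not exceed the number of total quasiorders on an $N$-element set. A total quasiorder on such a set is the same thing as an ordered partition of it, and these are counted by choosing the number $j\le N$ of $\equiv_\sigma$-blocks, then the underlying partition into $j$ blocks in $\Stirling Nj$ ways, then a linear order of the blocks in $j!$ ways. Hence the number of total quasiorders on an $N$-set is $\sum_{j=1}^N j!\,\Stirling Nj$, which gives $|Q|\le\sum_{j=1}^N j!\,\Stirling Nj$ and proves the first assertion of the corollary.

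For tightness I would use the ``interleaving pattern'' colouring. Given $(A_0,\ldots,A_{s-1})\in\binom{\omega}{n_0}\times\ldots\times\binom{\omega}{n_{s-1}}$, regard each $A_\ell$ as an embedding $n_\ell\hookrightarrow\omega$ and set $\chi^\ast(A_0,\ldots,A_{s-1})=\tp(A_0+\ldots+A_{s-1})\in Q$, the record of how $A_0,\ldots,A_{s-1}$ interleave in $\omega$. The point is that for every infinite $U\subseteq\omega$ every type $\tau\in Q$ is realised over $U$: writing the $\equiv_\sigma$-blocks of $\tau$ as $B_1\prec_\sigma\ldots\prec_\sigma B_j$, pick $u_1<\ldots<u_j$ in $U$ and give each element of $N$ the value $u_q$ of the block $B_q$ containing it; because $\sigma$ restricts to the natural strict order on each of the $s$ level blocks of $N$, the resulting $A_\ell$ are genuine $n_\ell$-element subsets of $U$, and $\chi^\ast(A_0,\ldots,A_{s-1})=\tau$. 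Thus $\chi^\ast$ attains exactly $|Q|$ colours on $\binom{U}{n_0}\times\ldots\times\binom{U}{n_{s-1}}$ for every infinite $U$, and, granted the identity $|Q|=\sum_{j=1}^N j!\,\Stirling Nj$, the displayed bound is sharp.

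I expect the main point requiring care to be this last identity together with the realisability claim: one must make sure that every interleaving pattern admissible for $(n_0,\ldots,n_{s-1})$ genuinely occurs inside an arbitrary infinite $U$ (so that no residual Ramsey phenomenon on $U$ can merge colours of $\chi^\ast$), and that the number of such patterns is precisely $\sum_{j=1}^N j!\,\Stirling Nj$. When $n_0=\ldots=n_{s-1}=1$ both facts are immediate, since then $Q$ is simply the set of all total quasiorders on $N=s$; for larger blocks one has to check that the constraint forcing $\sigma$ to restrict to the natural order on each level block does not disturb the count. The upper-bound half, by contrast, is a routine specialisation of Theorem~\ref{fbrd-scat.thm.prod-ramsey} together with the classical enumeration of ordered set partitions.
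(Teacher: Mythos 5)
Your proposal follows the paper's own route exactly: specialise Theorem~\ref{fbrd-scat.thm.prod-ramsey} to $\alpha=\omega$, observe that the bound collapses to $|Q|$ because $T(r,\omega)=1$, and for tightness colour each tuple by the multiplicative type of the interleaving and check that every type in $Q$ is realised inside every infinite $U$. The upper-bound half of your argument is complete and correct, since $|Q|$ is certainly at most the number of total quasiorders on an $N$-element set, which is $\sum_{j=1}^N j!\Stirling Nj$.

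The gap is the one you flagged yourself, and your suspicion is justified: the identity $|Q|=\sum_{j=1}^N j!\Stirling Nj$ is \emph{not} true in general, so the tightness half cannot be completed as stated. A tuple $(n_0,\ldots,n_{s-1},\sigma)$ is an admissible multiplicative type only if $\sigma$ restricts to the natural strict order on each level block: two points of $N$ lying in the same level block must have distinct values appearing in increasing order, so in particular they can never be $\equiv_\sigma$-equivalent. When every $n_\ell=1$ this constraint is vacuous and $Q$ really is the set of all total quasiorders on $N=s$ points, but as soon as some $n_\ell\ge 2$ the count drops strictly. For instance, $s=1$, $n_0=2$ gives $|Q|=1$ while $\sum_{j=1}^2 j!\Stirling 2j=3$, and tightness at $3$ would contradict the infinite Ramsey theorem, which produces a monochromatic $\binom U2$; likewise $s=2$, $n_0=2$, $n_1=1$ gives $|Q|=5$ versus $13$. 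The paper's own proof asserts the identity with ``it is easy to see'' and is subject to exactly the same objection, so your hesitation has in fact located the weak point of the published argument. Your realisability claim (every $\tau\in Q$ occurs over every infinite $U$) is correct and shows that $\chi^*$ attains exactly $|Q|$ colours on every infinite $U$; hence the genuinely sharp constant in both halves is $|Q|$, the number of admissible interleaving patterns, and the displayed formula is only an upper bound for it, with equality precisely when $n_0=\ldots=n_{s-1}=1$.
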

\begin{proof}
  Let $Q$ be the set of all the $(N, s)$-multiplicative types of the form
  $(n_0, \ldots, n_{s-1}, \sigma)$ where $\sigma$ is arbitrary.
  As in the proof of Theorem~\ref{fbrd-scat.thm.prod-ramsey} we conclude that there is an infinite $U \subseteq \omega$
  such that
  $$
    |\chi(\Emb(n_0, U) \times \ldots \times \Emb(n_{s-1}, U))| \le \sum_{\tau \in Q} T(r(\tau), \omega) = |Q|
  $$
  by the infinite version of the Ramsey's Theorem.
  But then it is easy to see that $|Q| = \sum_{j=1}^N j! \Stirling Nj$, the $N$th ordered Bell number minus
  the 0th term.
  
  To show that the upper bound is tight let us enumerate $Q$ as $\{\tau_0$, $\tau_1$, \ldots, $\tau_{k-1}\}$.
  Let
  $$
    \Phi : \Emb(n_0, \omega) \times \ldots \times \Emb(n_{s-1}, \omega) \to \bigcup_{\tau \in Q} \Emb_\tau(N, \omega \cdot s)
  $$
  be the bijection as in the proof of Theorem~\ref{fbrd-scat.thm.prod-ramsey} and define
  $$
    \chi^* : \Emb(n_0, \omega) \times \ldots \times \Emb(n_{s-1}, \omega) \to k
  $$
  by $\chi^*(f_0, f_1, \ldots, f_{s-1}) = j$ if and only if $\tp(\Phi(f_0, f_1, \ldots, f_{s-1})) = \tau_j$.
  Now, take an arbitrary infinite $U \subseteq \omega$ and let us show that
  $$
    \Phi(\Emb(n_0, U) \times \ldots \times \Emb(n_{s-1}, U)) = \bigcup_{\tau \in Q} \Emb_\tau(N, U \cdot s)
  $$
  realizes all the types from~$Q$. In other words, let us show that $\Emb_\tau(N, U \cdot s) \ne \0$ for every $\tau \in Q$.
  But that is straightforward.
  Take any $\tau = (n_0, \ldots, n_{s-1}, \sigma) \in Q$, let $r = r(\tau)$
  and let $u_0 < u_1 < \ldots < u_{r-1}$ be an arbitrary $r$-element chain of elements of $U$.
  As we have demonstrated in Section~\ref{fbrd-scat.sec.alpha-cdot-m}
  there is a unique $g \in \Emb(N, \omega \cdot s)$ with $\tp(g) = \tau$ and $\val(g) = \{u_0, u_1, \ldots, u_{r-1}\}$.
  Then, clearly, $g \in \Emb_\tau(N, U \cdot s)$.
\end{proof}

\section{Raising to a finite power}
\label{fbrd-scat.sec.alpha-pow-m}

Let $A$ be a well-ordered set.
Since $A^m$ is order-isomorphic to $(\underbrace{A \times \ldots \times A}_m, \Boxed\alex)$,
for every embedding $f : n \hookrightarrow A^m$ and every $i < n$ we can consider $f(i)$ to be
an $m$-tuple of elements of $A$:
\begin{align*}
    f(0) &= (a_{00}, a_{01}, \ldots, a_{0,m-1}),\\
    f(1) &= (a_{10}, a_{11}, \ldots, a_{1,m-1}),\\
         &\;\;\vdots \\
  f(n-1) &= (a_{n-1,0}, a_{n-1,1}, \ldots, a_{n-1,m-1}).
\end{align*}
where the $m$-tuples are ordered antilexicographically:
$$
  (a_{00}, a_{01}, \ldots, a_{0,m-1}) \alex \ldots \alex (a_{n-1,0}, a_{n-1,1}, \ldots, a_{n-1,m-1}).
$$
This, in turn, means that we can represent each embedding $f : n \hookrightarrow A^m$ as an ordered tree
$\tree(f)$ of height $m$ with exactly $n$ leaves where all the vertices of the tree except for the root are
labelled by the elements of~$A$.
The fact that $\tree(f)$ is ordered means that for every vertex of the tree which is not a leaf, the labels
of all the immediate successors of the vertex form finite chains of elements of~$A$.
Let $\tp(f)$ denote the unlabelled version of $\tree(f)$. An unlabelled tree of height $m$ and with
exactly $n$ leaves will be referred to as an \emph{$(n, m)$-power type}.
For an $(n, m)$-power type $\tau$ let
$$
  \Emb_\tau(n, A^m) = \{ f \in \Emb(n, A^m) : \tp(f) = \tau \}.
$$

\begin{EX}\label{fbrd-scat.ex.pow-1}
  Let $f : 12 \hookrightarrow \omega^4$ be the following embedding:
  $$
    \begin{array}{r@{\,=\,}c}
    f(0) & (0,1,0,0)\\
    f(1) & (3,1,0,0)\\
    f(2) & (1,3,6,0)\\
    f(3) & (1,3,6,0)
    \end{array}
    \quad
    \begin{array}{r@{\,=\,}c}
    f(4) & (5,7,0,2)\\
    f(5) & (0,8,1,2)\\
    f(6) & (1,1,3,2)\\
    f(7) & (4,0,2,5)
    \end{array}
    \quad
    \begin{array}{r@{\,=\,}c}
    f(8)  & (1,1,2,5)\\
    f(9)  & (3,1,2,5)\\
    f(10) & (5,1,4,5)\\
    f(11) & (7,2,4,5)
    \end{array}
  $$
  Fig.~\ref{fbrd-scat.fig.2}~$(a)$ depicts $\tree(f)$, and
  Fig.~\ref{fbrd-scat.fig.2}~$(b)$ depicts $\tp(f)$.
\end{EX}

\begin{figure}
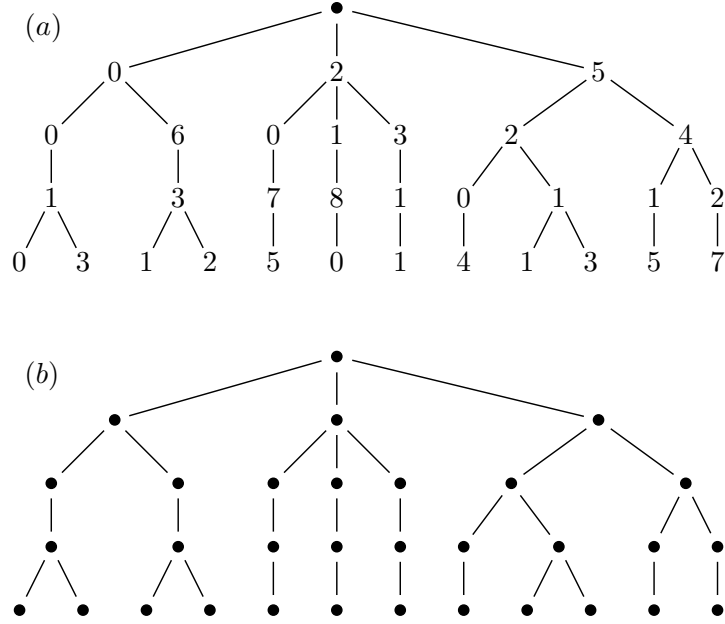

  \centering
  \input fbrd-scat-ex2.pgf
  \caption{A tree representation and a type of an embedding $12 \hookrightarrow \omega^4$}
  \label{fbrd-scat.fig.2}
\end{figure}

For an embedding $f : n \hookrightarrow A^m$ let $\val(f)$ denote the tuple of all the finite subchains
of $A$ that appear as ordered immediate successors of non-leaves of $\tree(f)$, where the subchains are
listed from top to bottom, and from left to right.
Given finite $n$ and $m$, each embedding $f : n \hookrightarrow A^m$ is uniquely determined
by the pair $(\tp(f), \val(f))$.

\begin{EX}
  For the embedding $f : 12 \hookrightarrow \omega^4$ in Example~\ref{fbrd-scat.ex.pow-1} we have that
  \begin{align*}
    \val(f) = (&0 < 2 < 5,\quad 0 < 6,\quad 0 < 1 < 3,\quad 2 < 4,\quad 1,\quad 3,\\
               &7,\quad 8,\quad 1,\quad 0 < 1,\quad 1 < 2,\quad 0 < 3,\quad 1 < 2,\quad 5,\\
               &0,\quad 1,\quad 4,\quad 1 < 3,\quad 5,\quad 7).
  \end{align*}
  Clearly, given an $(n,m)$-power type such as the one in Fig.~\ref{fbrd-scat.fig.2}~$(b)$ and an appropriate
  sequence of finite subchains of $\alpha$ such as the one above, one can uniquely reconstruct the tree of the embedding,
  and hence the embedding itself.
\end{EX}

\begin{LEM}\label{fbrd-scat.lem.1-power}
  Let $A$ be a countable well-ordered set with finite big Ramsey degrees.
  Fix integers $n \ge 1$ and $m \ge 1$. Then for every $(n, m)$-power type $\tau$ there is an integer $t = t(\tau)$
  such that every $k \ge 2$ and every coloring
  $$
    \chi : \Emb_\tau(n, A^m) \to k
  $$
  there is a $U \subseteq A$ order-isomorphic to $A$ such that
  $$
    |\chi(\Emb_\tau(n, U^m))| \le t.
  $$
\end{LEM}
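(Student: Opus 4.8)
The plan is to recast the statement so that it becomes an immediate application of the product Ramsey theorem, Theorem~\ref{fbrd-scat.thm.prod-ramsey} (which applies to $A$ since $A$ is order-isomorphic to a countable ordinal and having finite big Ramsey degrees is an order-isomorphism invariant). Fix $n$, $m$ and an $(n,m)$-power type $\tau$; let $v_1, v_2, \ldots, v_N$ be the non-leaf vertices of $\tau$ listed from top to bottom and from left to right, and for each $i$ let $r_i \ge 1$ be the number of immediate successors of $v_i$ in $\tau$.

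The first step is to upgrade the remark preceding the lemma --- that an embedding $f : n \hookrightarrow A^m$ is determined by the pair $(\tp(f), \val(f))$ --- to the assertion that, once the type is fixed to be $\tau$, the map $\val$ is a \emph{bijection}
$$
  \val : \Emb_\tau(n, A^m) \longrightarrow \Emb(r_1, A) \times \ldots \times \Emb(r_N, A),
$$
where an $r$-element subchain of $A$ is identified with the corresponding member of $\Emb(r, A)$. Injectivity is the cited remark. For surjectivity one checks that an arbitrary tuple $(C_1, \ldots, C_N)$, with $C_i$ an $r_i$-element chain in $A$, used to label the immediate successors of $v_i$ in increasing order, produces an ordered labelled tree whose leaf-paths, read left to right, form a strictly $\alex$-increasing sequence of $n$ distinct $m$-tuples of elements of $A$ --- hence an embedding $f$ with $\tp(f) = \tau$ and $\val(f) = (C_1, \ldots, C_N)$. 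The one genuinely new point is checking that the tree structure forces the antilexicographic order on the leaf-tuples to agree with the left-to-right order of the leaves, i.e.\ that no conflict can arise between the chains attached to different vertices; this is because labels occurring higher in the tree govern more significant coordinates. I would also record here the (routine) observation that this encoding respects suborders: for $U \subseteq A$, an $f \in \Emb_\tau(n, A^m)$ maps into $U^m$ if and only if every chain in $\val(f)$ consists of elements of $U$, so $\val$ restricts to a bijection $\Emb_\tau(n, U^m) \to \Emb(r_1, U) \times \ldots \times \Emb(r_N, U)$.

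With this in hand the lemma is immediate. Put $t = t(\tau) := t(A, r_1, \ldots, r_N)$, the integer furnished by Theorem~\ref{fbrd-scat.thm.prod-ramsey} with $s = N$ and $n_i = r_i$. Given $k \ge 2$ and $\chi : \Emb_\tau(n, A^m) \to k$, transport $\chi$ along $\val^{-1}$ to a coloring $\chi'$ of $\Emb(r_1, A) \times \ldots \times \Emb(r_N, A)$; Theorem~\ref{fbrd-scat.thm.prod-ramsey} produces a $U \subseteq A$ order-isomorphic to $A$ with $|\chi'(\Emb(r_1, U) \times \ldots \times \Emb(r_N, U))| \le t$, and the suborder-compatibility of $\val$ then gives $|\chi(\Emb_\tau(n, U^m))| \le t$. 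I do not expect a real obstacle: this is the power-type analogue of Lemma~\ref{fbrd-scat.lem.1} and Lemma~\ref{fbrd-scat.lem.1-additive}, with the single value-chain of the multiplicative case replaced by a tuple of value-chains indexed by the non-leaf vertices of the tree type, and essentially all of the effort goes into the bookkeeping of the first step.
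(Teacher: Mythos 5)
Your proposal is correct and follows essentially the same route as the paper: both identify $\val$ as a bijection from $\Emb_\tau(n,A^m)$ onto a product $\Emb(n_0,A)\times\ldots\times\Emb(n_{s-1},A)$ determined by the non-leaf vertices of $\tau$, transport the coloring along it, and invoke Theorem~\ref{fbrd-scat.thm.prod-ramsey}. You merely spell out a few details the paper leaves implicit (the explicit choice of the $n_i$ as successor counts, surjectivity of $\val$, and compatibility with passing to $U\subseteq A$), all of which check out.
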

\begin{proof}
  Fix $n$, $m$ and $\tau$ as in the formulation of the lemma.
  Recall that each embedding $f : n \hookrightarrow A^m$ is uniquely determined
  by the pair $(\tp(f), \val(f))$. Moreover, for appropriately chosen $n_0, n_1, \ldots, n_{s-1} \ge 1$ (that depend on $\tau$ only),
  given a $v \in \Emb(n_0, A) \times \Emb(n_1, A) \times \ldots \times \Emb(n_{s-1}, A)$
  there is a unique embedding $f_v : n \hookrightarrow A^m$ with $\tp(f_v) = \tau$ and $\val(f_v) = v$.
  Therefore,
  $$
    \val : \Emb_\tau(n, A^m) \to \Emb(n_0, A) \times \Emb(n_1, A) \times \ldots \times \Emb(n_{s-1}, A)
  $$
  is a bijection. Let $t$ be an integer whose existence is guaranteed by Theorem~\ref{fbrd-scat.thm.prod-ramsey}.

  Take any $k \ge 2$, a coloring $\chi : \Emb_\tau(n, A \cdot m) \to k$,
  and construct a coloring $\chi' : \Emb(n_0, A) \times \Emb(n_1, A) \times \ldots \times \Emb(n_{s-1}, A) \to k$ by
  $\chi'(v) = \chi(f_v)$. As we have just seen, this coloring is well defined.
  By Theorem~\ref{fbrd-scat.thm.prod-ramsey} there is a $U \subseteq A$ order-isomorphic to $A$ such that
  $$
    |\chi'(\Emb(n_0, U) \times \ldots \times \Emb(n_{s-1}, U))| \le t.
  $$
  But then it is easy to see that
  $
    |\chi(\Emb_\tau(n, U^m))| \le t
  $
  because $\chi(f) = \chi'(\val(f))$ for all $f \in \Emb_\tau(n, U^m)$.
\end{proof}

\begin{THM}\label{fbrd-scat.thm.alpha-pow-m}
  Let $\alpha$ be a countable ordinal with finite big Ramsey degrees.
  Then $\alpha^m$ has finite big Ramsey degrees for every integer $m \ge 1$.
\end{THM}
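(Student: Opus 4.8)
The plan is to imitate the proof of Theorem~\ref{fbrd-scat.thm.alpha-cdot-m} verbatim, using Lemma~\ref{fbrd-scat.lem.1-power} in place of Lemma~\ref{fbrd-scat.lem.1}. Fix $n \ge 1$ and $m \ge 1$. First I would note that there are only finitely many $(n, m)$-power types: an $(n, m)$-power type is an ordered tree of height $m$ with exactly $n$ leaves, and there are only finitely many such trees. Enumerate them as $\tau_0, \tau_1, \ldots, \tau_{t-1}$, and for each $j < t$ let $t_j = t(\tau_j)$ be the integer provided by Lemma~\ref{fbrd-scat.lem.1-power}. The claim I want to establish is
$$
  T(n, \alpha^m) \le \sum_{j < t} t_j < \infty.
$$

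To prove this, take any $k \ge 2$ and any coloring $\chi : \Emb(n, \alpha^m) \to k$. Applying Lemma~\ref{fbrd-scat.lem.1-power} to the restriction of $\chi$ to $\Emb_{\tau_0}(n, \alpha^m)$ yields a $U_0 \subseteq \alpha$ order-isomorphic to $\alpha$ with $|\chi(\Emb_{\tau_0}(n, U_0^m))| \le t_0$. Then, inductively for $j = 1, \ldots, t-1$, I would apply the same lemma inside $U_{j-1}$ (which is order-isomorphic to $\alpha$, hence still has finite big Ramsey degrees) to the restriction of $\chi$ to $\Emb_{\tau_j}(n, U_{j-1}^m)$, obtaining a $U_j \subseteq U_{j-1}$ order-isomorphic to $U_{j-1}$, and hence to $\alpha$, with $|\chi(\Emb_{\tau_j}(n, U_j^m))| \le t_j$. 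Since $U_{t-1} \subseteq U_j$ for every $j$, we have $\Emb_{\tau_j}(n, U_{t-1}^m) \subseteq \Emb_{\tau_j}(n, U_j^m)$, and $\{\Emb_{\tau_j}(n, U_{t-1}^m) : j < t\}$ is a partition of $\Emb(n, U_{t-1}^m)$ because every embedding has a unique power type; therefore
$$
  |\chi(\Emb(n, U_{t-1}^m))|
  \le \sum_{j < t} |\chi(\Emb_{\tau_j}(n, U_{t-1}^m))|
  \le \sum_{j < t} |\chi(\Emb_{\tau_j}(n, U_j^m))|
  \le \sum_{j < t} t_j,
$$
which is the desired bound, and $U_{t-1}$ is order-isomorphic to $\alpha$.

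The real work has already been isolated in Lemma~\ref{fbrd-scat.lem.1-power} and, behind it, in Theorem~\ref{fbrd-scat.thm.prod-ramsey}: the point is that an embedding $f : n \hookrightarrow A^m$ is coded by the pair $(\tp(f), \val(f))$, so that a coloring of a single power type becomes a coloring of a product $\Emb(n_0, A) \times \cdots \times \Emb(n_{s-1}, A)$ to which the product Ramsey theorem for $\alpha$ applies. Given that lemma, the present theorem is a routine finite-iteration argument; the only facts one needs to verify are that passing to a smaller copy preserves the per-type bound (immediate from $\Emb_\tau(n, V^m) \subseteq \Emb_\tau(n, U^m)$ whenever $V \subseteq U$) and that the finitely many $(n,m)$-power types partition $\Emb(n, U^m)$.
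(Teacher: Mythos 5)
Your proposal is correct and follows essentially the same route as the paper's own proof: enumerate the finitely many $(n,m)$-power types, apply Lemma~\ref{fbrd-scat.lem.1-power} once per type while shrinking the copy of $\alpha$, and sum the per-type bounds $t(\tau_j)$ over the partition of $\Emb(n, U^m)$ by type. The extra details you supply (monotonicity of $\Emb_\tau(n, \cdot^m)$ under shrinking and the fact that the types partition the embeddings) are exactly the facts the paper uses implicitly.
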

\begin{proof}
  Fix $n \ge 1$ and $m \ge 1$ and let $\tau_0$, $\tau_1$, \dots, $\tau_{s-1}$ be all the $(n, m)$-power types.
  Let $t_j = t(\tau_j)$, $j < s$, be the integers whose existence is provided by Lemma~\ref{fbrd-scat.lem.1-power}.
  We are going to show that
  $$
    T(n, \alpha^m) \le \sum_{j < s} t_j < \infty.
  $$
  Take any $k \ge 2$ and any coloring $\chi : \Emb(n, \alpha^m) \to k$.
  By Lemma~\ref{fbrd-scat.lem.1-power} there is a $U_0 \subseteq \alpha$ order-isomorphic to $\alpha$ such that
  $$
    |\chi(\Emb_{\tau_0}(n, U_0^m))| \le t_0.
  $$
  By the same for each $j \in \{1, \ldots, s-1\}$ a $U_j \subseteq U_{j-1}$ order-isomorphic to $U_{j-1}$ (and hence to $\alpha$) such that
  $$
    |\chi(\Emb_{\tau_j}(n, U_j^m))| \le t_j.
  $$
  Then using the fact that $U_{s-1} \subseteq U_j$ we have:
  \begin{align*}
    |\chi(\Emb(n, U_{s-1}^m))|
    &= \sum_{j < s} |\chi(\Emb_{\tau_j}(n, U_{s-1}^m))|\\
    &\le \sum_{j < s} |\chi(\Emb_{\tau_j}(n, U_{j}^m))| \le \sum_{j < t} t_j. \qedhere
  \end{align*}
\end{proof}

\section{The main result}
\label{fbrd-scat.sec.ord-pol}

In this section we prove the main result of the paper: we show that a countable
ordinal $\alpha$ has finite big Ramsey degrees if and only if $\alpha < \omega^\omega$.
For countable ordinals $\alpha \ge \omega^\omega$ we show that all their big Ramsey degrees are infinite.

Let $\alpha$ be an ordinal and let $\xi < \alpha$. Then $\alpha \setminus \xi$ will be referred to
as a \emph{remainder of $\alpha$}. Note that for every ordinal $\alpha \ge 1$ every remainder of $\omega^\alpha$ is
order-isomorphic to~$\omega^\alpha$.

\begin{LEM}\label{fbrd-scat.lem.emb-sep}
  Let $\alpha_0 \ge \alpha_1 \ge \ldots \ge \alpha_{n-1}$ be ordinals such that for each $i < n - 1$ every remainder of $\alpha_i$
  is order-isomorphic to~$\alpha_i$ (note that we do not require this for $\alpha_{n-1}$). Then
  $
    \Emb\Big(\sum_{i < n} \alpha_i, \sum_{i < n} \alpha_i\Big) = \sum_{i < n} \Emb(\alpha_i, \alpha_i)
  $.
\end{LEM}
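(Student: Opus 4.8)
The statement is an equality of two sets of self-embeddings of $\beta:=\sum_{i<n}\alpha_i$; here $\sum_{i<n}\Emb(\alpha_i,\alpha_i)$ is to be read as the set of all ``block-diagonal'' embeddings $\sum_{i<n}f_i$, where each $f_i\in\Emb(\alpha_i,\alpha_i)$ and $\sum_{i<n}f_i$ sends $(\gamma,i)$ to $(f_i(\gamma),i)$. The inclusion $\supseteq$ is routine: a block-diagonal map preserves blocks and acts as $f_i$ on the $i$th block, so it is strictly increasing for the antilexicographic order and hence a self-embedding of $\beta$. The whole content of the lemma is therefore the inclusion $\subseteq$: I must show that an arbitrary $g\in\Emb(\beta,\beta)$ respects the block decomposition, i.e.\ maps the $i$th block $B_i:=\alpha_i\times\{i\}$ into itself for every $i<n$. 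Once that is known, $g$ is the disjoint union of its restrictions $g|_{B_i}:B_i\to B_i$, each of which is (after identifying $B_i$ with $\alpha_i$) a self-embedding of $\alpha_i$, so $g\in\sum_{i<n}\Emb(\alpha_i,\alpha_i)$.

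I would prove block-preservation by induction on $n$, peeling off the first and largest block. If $n=1$, or if $\beta$ is finite (in which case the only self-embedding is the identity), there is nothing to prove, so assume $n\ge 2$ and $\alpha_0$ infinite. Since $0<n-1$, the hypothesis applies to $\alpha_0$, and ``every remainder of $\alpha_0$ is order-isomorphic to $\alpha_0$'' says precisely that $\xi+\alpha_0=\alpha_0$ for all $\xi<\alpha_0$, i.e.\ that $\alpha_0$ is additively indecomposable; being infinite it is a limit ordinal $\omega^\delta$ with $\delta\ge 1$. Write $\sigma=\alpha_1+\ldots+\alpha_{n-1}$, the order type of the final segment $B_1\cup\ldots\cup B_{n-1}$ of $\beta$. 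The heart of the argument is to show $g(B_0)\subseteq B_0$. Suppose not, and let $x_0\in B_0$ be least with $g(x_0)\notin B_0$, say $g(x_0)\in B_j$ with $j\ge 1$. The tail $T=\{x\in B_0:x\ge x_0\}$ is a remainder of $\alpha_0$, hence has order type exactly $\alpha_0$; and $g$ sends $T$, immediately followed by all of $B_1\cup\ldots\cup B_{n-1}$, injectively and in increasing order into the final segment $\{y\in\beta:y\ge g(x_0)\}$, with $g(T)$ lying entirely below $g(B_1\cup\ldots\cup B_{n-1})$. Thus this final segment has order type at least $\alpha_0+\sigma$. But a final segment of $\beta$ whose least element lies in $B_j$ with $j\ge 1$ has order type at most $\alpha_j+\alpha_{j+1}+\ldots+\alpha_{n-1}\le\sigma$, and since $\sigma\le\alpha_0\cdot(n-1)<\alpha_0\cdot\omega$ with $\alpha_0=\omega^\delta$, one has $\alpha_0+\sigma>\sigma$ (recall that $\mu+\nu=\nu$ forces $\nu\ge\mu\cdot\omega$). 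This contradiction establishes $g(B_0)\subseteq B_0$.

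To finish the inductive step, observe that $g|_{B_0}$ is then a self-embedding of the ordinal $\alpha_0$, so $(g|_{B_0})(\xi)\ge\xi$ for all $\xi<\alpha_0$; since $\alpha_0$ is a limit ordinal, $g(B_0)$ is cofinal in $B_0$, and consequently $g$ sends every element above $B_0$ strictly above all of $B_0$, i.e.\ $g$ maps $B_1\cup\ldots\cup B_{n-1}$ into itself. Hence $g=(g|_{B_0})+g'$, where $g'$ is a self-embedding of $\sum_{1\le i<n}\alpha_i$. The ordinals $\alpha_1\ge\ldots\ge\alpha_{n-1}$ still satisfy the hypothesis of the lemma (the remainder condition concerns only the non-final blocks), so by the induction hypothesis $g'$ is block-diagonal, and therefore so is $g$.

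I expect the one genuinely delicate point to be the step $g(B_0)\subseteq B_0$ when the maximal value occurs more than once among the $\alpha_i$ (for instance $\beta=\omega\cdot n$): then the crude bound ``$\alpha_0$ cannot be embedded into $\alpha_1+\ldots+\alpha_{n-1}$'' simply fails, and one must instead use that $g$ is forced to pack \emph{both} a copy of $\alpha_0$ \emph{and} a copy of all of $\sigma$, in that order, above $g(x_0)$, together with the sharp ordinal-arithmetic inequality $\alpha_0+\sigma>\sigma$. Everything else — the inclusion $\supseteq$, the degenerate cases ($\alpha_0$ finite, or some $\alpha_i=0$), and the identification of ``every remainder isomorphic to itself'' with additive indecomposability — is routine bookkeeping.
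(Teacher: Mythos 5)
Your proof is correct, and it reaches the crucial block-preservation property by a genuinely different mechanism than the paper's. The paper argues directly, without induction on $n$: it first shows (Claim 1) that $f(0,i)$ stays in the $i$th block for every $i$, and then (Claim 2) that each whole block maps into itself; every contradiction there comes from squeezing a copy of some $\alpha_p$, or of a remainder of $\alpha_p$ (which is where the hypothesis enters), into a proper initial segment $\eta$ of some $\alpha_q$, yielding $\alpha_p\le\eta<\alpha_q$ against $\alpha_p\ge\alpha_q$. In particular the ``delicate point'' you flag --- repeated maximal blocks, as in $\omega\cdot n$ --- is handled there locally: the image of a full remainder of $\alpha_i$ is trapped strictly below $f(0,i+1)$ inside block $i+1$, so one still gets the strict inequality $\alpha_i<\alpha_{i+1}$ even when $\alpha_i=\alpha_{i+1}$. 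You instead peel off the first block by induction and derive the contradiction from a global count: the final segment above $g(x_0)$ has order type at most $\sigma=\alpha_1+\ldots+\alpha_{n-1}$ yet must contain a copy of $\alpha_0+\sigma$, which is impossible since $\sigma\le\alpha_0\cdot(n-1)<\alpha_0\cdot\omega$ while $\mu+\nu=\nu$ forces $\nu\ge\mu\cdot\omega$. Your route needs slightly more ordinal arithmetic (the identification of the remainder condition with additive indecomposability, and the absorption bound), but buys a clean inductive structure and isolates the single nontrivial step; the paper's route uses nothing beyond ``an ordinal embeds into another iff it is $\le$ it.'' One minor simplification available to you: to push $B_1\cup\ldots\cup B_{n-1}$ off $B_0$ you can simply quote $g(y)\ge y$ for any self-embedding $g$ of a well-order, avoiding the cofinality argument.
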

\begin{proof}
  The inclusion $\supseteq$ is obvious. As for the other inclusion
  take any $f \in \Emb(\sum_{i < n} \alpha_i, \sum_{i < n} \alpha_i)$.
  
  \medskip
  
  Claim 1. For all $i < n$ we have that $f(0, i) = (\xi, i)$ for some $\xi \in \alpha_i$.
  
  \medskip
  
  Proof. Suppose, first, that there is an $i$ and a $j > i$ such that $f(0, i) = (\xi, j)$ for some $\xi \in \alpha_j$.
  Then there exist $p$ and $q$ such that $p < q$, $f(0, p) = (\epsilon, q)$ and $f(0, q) = (\eta, q)$ for some $\epsilon, \eta \in \alpha_q$
  satisfying $\epsilon < \eta$.
  But then $f$ restricted to $\alpha_p \times \{p\}$ is actually an embedding of $\alpha_p$ into $\eta < \alpha_q$ whence follows that
  $\alpha_p < \alpha_q$. Contradiction with the assumption that $\alpha_p \ge \alpha_q$.
  
  Suppose, now, that there is an $i$ and a $j < i$ such that $f(0, i) = (\xi, j)$ for some $\xi \in \alpha_j$.
  Then there exist $p$ and $q$ such that $p < q$, $f(0, p) = (\epsilon, p)$ and $f(0, q) = (\eta, p)$ for some $\epsilon, \eta \in \alpha_p$
  satisfying $\epsilon < \eta$.
  But then $f$ restricted to $\alpha_p \times \{p\}$ is actually an embedding of $\alpha_p$ into $\eta < \alpha_p$ whence follows that
  $\alpha_p < \alpha_p$. Contradiction.
  This concludes the proof of Claim~1.

  \medskip
  
  Claim 2. For all $i < n$ and all $\xi \in \alpha_i$ we have that $f(\xi, i) \in \alpha_i \times \{i\}$.
  
  \medskip
  
  Proof. Suppose this is not the case. Then there is an $i < n$ and a $\xi \in \alpha_i$ such that $f(\xi, i) \notin \alpha_i \times \{i\}$.
  Then $f(\xi, i) \in \alpha_{i+1} \times \{i+1\}$ since $(0, i) < (\xi, i) < (0, i+1)$,
  $f(0, i) \in \alpha_i \times \{i\}$ and $f(0, i+1) \in \alpha_{i+1} \times \{i+1\}$. Let $f(0, i+1) = (\eta, i+1)$ and
  let $\rho = \alpha_i \setminus \xi$ be a remainder of $\alpha_i$.
  Then $f$ restricted to $\rho \times \{i\}$ is actually an embedding of $\rho$ into $\eta$.
  Since $\rho$ is order-isomorphic to $\alpha_i$ by the assumption, we finally get that $\alpha_i \le \eta < \alpha_{i+1}$. Contradiction.
  This concludes the proof of Claim~2.
  
  \medskip
  
  It is now easy to complete the proof. Claim~2 yields that $\restr{f}{\alpha_i \times \{i\}}$ is an embedding
  $\alpha_i \times \{i\} \hookrightarrow \alpha_i \times \{i\}$, so it uniquely determines an embedding
  $f_i \in \Emb(\alpha_i, \alpha_i)$
  by $f_i(\xi) = \eta$ if and only if $f(\xi, i) = (\eta, i)$. Then it clearly follows that
  $f = \sum_{i < n} f_i \in \sum_{i < n} \Emb(\alpha_i, \alpha_i)$.
\end{proof}

\begin{LEM}\label{fbrd-scat.lem.sub-sum}
  Let $\alpha_0 \ge \alpha_1 \ge \ldots \ge \alpha_{\ell-1}$ be ordinals such that for each $i < \ell - 1$ every remainder of $\alpha_i$
  is order-isomorphic to~$\alpha_i$ (note that we do not require this for $\alpha_{\ell-1}$).
  If $\alpha_0 + \alpha_1 + \ldots + \alpha_{\ell-1}$ has finite
  big Ramsey degrees then $\alpha_{j_0} + \alpha_{j_1} + \ldots + \alpha_{j_{m-1}}$ has finite
  big Ramsey degrees for all $m \ge 1$ and $0 \le j_0 < j_1 < \ldots < j_{m-1} \le \ell-1$.
\end{LEM}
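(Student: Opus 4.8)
The plan is to establish the quantitative bound $T(n, \alpha_{j_0} + \cdots + \alpha_{j_{m-1}}) \le T(n, \alpha_0 + \cdots + \alpha_{\ell-1})$ for every $n \ge 1$, which immediately yields the lemma. Write $\beta = \sum_{i < \ell}\alpha_i$ and $\gamma = \sum_{r < m}\alpha_{j_r}$, realized concretely as $\bigcup_{i<\ell}\alpha_i \times \{i\}$ and $\bigcup_{r<m}\alpha_{j_r}\times\{r\}$, each ordered antilexicographically. First I would record the obvious order-embedding $\iota : \gamma \hookrightarrow \beta$ given by $\iota(\xi, r) = (\xi, j_r)$ (monotone since $j_0 < \cdots < j_{m-1}$), whose image is the set $\bigcup_{r<m}\alpha_{j_r}\times\{j_r\}$.

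Given $k \ge 2$ and a coloring $\chi : \Emb(n, \gamma) \to k$, I would lift it to $\chi^* : \Emb(n, \beta) \to k$ by setting $\chi^*(g) = \chi(\iota^{-1}\circ g)$ whenever $\im(g) \subseteq \im(\iota)$ (in which case $\iota^{-1}\circ g \in \Emb(n,\gamma)$), and $\chi^*(g) = 0$ otherwise. Since $\beta$ has finite big Ramsey degrees there is $w \in \Emb(\beta, \beta)$ with $|\chi^*(w\circ\Emb(n,\beta))| \le T(n,\beta)$. This is the one and only step that uses the hypothesis on the $\alpha_i$: by Lemma~\ref{fbrd-scat.lem.emb-sep}, whose hypotheses are exactly those assumed here, $w = \sum_{i<\ell}w_i$ with $w_i \in \Emb(\alpha_i,\alpha_i)$.

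Now discard the blocks not present in $\gamma$ and put $W = \sum_{r<m}w_{j_r}$. A direct monotonicity check gives $W \in \Emb(\gamma,\gamma)$, so $U := \im(W) \subseteq \gamma$ is order-isomorphic to $\gamma$. The crux is the intertwining identity $\iota\circ W = w\circ\iota$, verified pointwise: both sides send $(\xi, r)$ to $(w_{j_r}(\xi), j_r)$. Hence for every $g' \in \Emb(n,\gamma)$ we get $w\circ(\iota\circ g') = \iota\circ(W\circ g')$, so $\im(w\circ\iota\circ g') \subseteq \im(\iota)$ and $\chi^*(w\circ\iota\circ g') = \chi(W\circ g')$. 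Since $\Emb(n, U) = \{W\circ g' : g' \in \Emb(n,\gamma)\}$, it follows that $\chi(\Emb(n, U)) \subseteq \chi^*(w\circ\Emb(n,\beta))$, and therefore $|\chi(\Emb(n, U))| \le T(n,\beta)$. As $k$ and $\chi$ were arbitrary, $T(n,\gamma) \le T(n,\beta) < \infty$; since $n \ge 1$ was arbitrary, $\gamma$ has finite big Ramsey degrees.

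I do not expect a genuine obstacle: once Lemma~\ref{fbrd-scat.lem.emb-sep} is in hand, the argument is pure bookkeeping. The only delicate points are to arrange $\iota$, $\chi^*$ and $W$ so that the identity $\iota\circ W = w\circ\iota$ holds on the nose, and to confirm that moving between $\gamma$, its copy $U \subseteq \gamma$, and the large chain $\beta$ neither creates nor destroys embeddings of $n$ — that is, that $\Emb(n, U)$ is precisely the image of $\Emb(n, \gamma)$ under composition with $W$, which is what legitimizes reading off $\chi$ on $U$ from $\chi^*$ on $\beta$.
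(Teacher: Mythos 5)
Your proposal is correct and follows essentially the same route as the paper: lift the coloring along the canonical embedding of the subsum into the full sum, apply the hypothesis to get a good self-embedding $w$, decompose $w$ blockwise via Lemma~\ref{fbrd-scat.lem.emb-sep}, restrict to the selected blocks, and use the intertwining identity to transfer the bound back. The only differences are notational (the paper calls your $\iota$, $\chi^*$, $W$ respectively $\phi$, $\chi'$, $w'$), so there is nothing to add.
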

\begin{proof}
  Assume that $\alpha_{0} + \alpha_{1} + \ldots + \alpha_{\ell-1}$ has finite big Ramsey degrees and take any
  $m \ge 1$ and $0 \le j_0 < j_1 < \ldots < j_{m-1} \le \ell-1$.
  Let $\alpha = \alpha_{0} + \alpha_{1} + \ldots + \alpha_{\ell-1}$,
  $\beta = \alpha_{j_0} + \alpha_{j_1} + \ldots + \alpha_{j_{m-1}}$ and
  let $\phi : \beta \hookrightarrow \alpha$ be the obvious embedding $\phi(\xi, s) = (\xi, j_s)$.
  We are going to show that
  $
    T(n, \beta) \le T(n, \alpha)
  $
  for all $n \ge 1$.
  
  Take any $n \ge 1$, $k \ge 2$ and a coloring
  $
    \chi : \Emb(n, \beta) \to k.
  $
  Define
  $
    \chi' : \Emb(n, \alpha) \to k
  $
  as follows: $\chi'(\phi \circ f) = \chi(f)$ for all $f \in \Emb(n, \beta)$,
  and $\chi'(g) = 0$ if there is no $f \in \Emb(n, \beta)$ with
  $g = \phi \circ f$. By the assumption, there is a $w \in \Emb(\alpha, \alpha)$
  such that
  $$
    |\chi'(w \circ \Emb(n, \alpha))| \le T(n, \alpha).
  $$
  Lemma~\ref{fbrd-scat.lem.emb-sep} now implies that there exist embeddings $w_i \in \Emb(\alpha_i, \alpha_i)$,
  $i < \ell$, such that $w = w_0 + w_1 + \ldots + w_{\ell-1}$.
  Let $w' = w_{j_0} + w_{j_1} + \ldots + w_{j_{m-1}}$. Clearly, $w' \in \Emb(\beta, \beta)$
  and $\phi \circ w' = w \circ \phi$. So,
  \begin{align*}
    |\chi(w' \circ \Emb(n, \beta))|
      &= |\chi'(\phi \circ w' \circ \Emb(n, \beta))| && \text{[definition of $\chi'$]}\\
      &= |\chi'(w \circ \phi \circ \Emb(n, \beta))| && \text{[$\phi \circ w' = w \circ \phi$]}\\
      &\le |\chi'(w \circ \Emb(n, \alpha))| \le T(n, \alpha). && \qedhere
  \end{align*}
\end{proof}

\begin{LEM}\label{fbrd-scat.lem.infty}
   $T(n, \omega^\alpha) = \infty$ for every $2 \le n < \omega$ and every countable ordinal~$\alpha \ge \omega$.
\end{LEM}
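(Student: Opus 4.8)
The plan is to obtain the case $n=2$ directly from Galvin's bracket relation (Theorem~\ref{fbrd-scat.thm.bracket}) and then propagate it to all $n\ge 2$ via monotonicity (Lemma~\ref{fbrd-scat.lem.T-monotono}).

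First I would note that for a countable ordinal $\alpha\ge\omega$ the chain $\omega^\alpha$ is scattered (no ordinal embeds $\QQ$) and contains no uncountable well-ordered subchain (it is countable), so Theorem~\ref{fbrd-scat.thm.bracket} applies. Writing $(D_i)_{i<\omega}$ for the sequence $\omega,\omega^2,\omega^2,\omega^3,\omega^3,\ldots$, the negation of the bracket relation yields a coloring $\chi:\Emb(2,\omega^\alpha)\to\omega$ such that for every $i<\omega$ and every $S\subseteq\omega^\alpha$ with $S\cong D_i$ we have $i\in\chi(\Emb(2,S))$. The auxiliary observation I would isolate is that, since $\alpha\ge\omega$, the ordinal $\omega^\alpha$ has $\omega^k$ as an initial segment for every finite $k\ge1$; hence every $U$ order-isomorphic to $\omega^\alpha$ contains a copy of each $D_i$ (each $D_i$ being a finite power of $\omega$), and therefore $\{0,1,\ldots\}\subseteq\chi(\Emb(2,U))$ for every such $U$, because $\Emb(2,S)\subseteq\Emb(2,U)$ whenever $S\subseteq U$.

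Next I would convert this $\omega$-coloring into the finite colorings that figure in the definition of a big Ramsey degree. Suppose for contradiction that $T(2,\omega^\alpha)=t<\infty$; note $t\ge1$, so $t+1\ge2$. Define the $(t+1)$-coloring $\chi':\Emb(2,\omega^\alpha)\to t+1$ by $\chi'(f)=\min\{\chi(f),t\}$. Since $T(2,\omega^\alpha)=t$, there is a $U\subseteq\omega^\alpha$ order-isomorphic to $\omega^\alpha$ with $|\chi'(\Emb(2,U))|\le t$. But, by the auxiliary observation, $U$ contains copies of $D_0,D_1,\ldots,D_t$, so $\{0,1,\ldots,t\}\subseteq\chi(\Emb(2,U))$ and hence $\{0,1,\ldots,t\}\subseteq\chi'(\Emb(2,U))$, contradicting $|\chi'(\Emb(2,U))|\le t$. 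Thus $T(2,\omega^\alpha)=\infty$. Finally, $\omega^\alpha$ is a limit ordinal, so Lemma~\ref{fbrd-scat.lem.T-monotono} gives $T(2,\omega^\alpha)\le T(n,\omega^\alpha)$ for all $n\ge2$, whence $T(n,\omega^\alpha)=\infty$ for every $2\le n<\omega$.

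The only point that needs a little care — and the one I would flag as the ``obstacle,'' modest as it is — is the passage from Galvin's $\omega$-colored bracket relation to the finite-coloring setting of big Ramsey degrees: the truncation $\chi'=\min\{\chi,t\}$ together with the verification that a copy of $\omega^\alpha$ still absorbs copies of all the $D_i$. Everything else is routine bookkeeping, which is why this is an immediate corollary of Theorem~\ref{fbrd-scat.thm.bracket}.
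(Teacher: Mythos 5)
Your proposal is correct and follows essentially the same route as the paper: invoke Galvin's bracket relation (Theorem~\ref{fbrd-scat.thm.bracket}) to get an $\omega$-coloring hitting color $i$ on every copy of $\omega^{n_i}$, truncate it to a finite coloring, observe that any copy of $\omega^\alpha$ absorbs copies of all finite powers of $\omega$ since $\alpha\ge\omega$, and reduce to $n=2$ via Lemma~\ref{fbrd-scat.lem.T-monotono}. The only cosmetic difference is that the paper truncates at $t-1$ and concludes $T(2,\omega^\alpha)\ge t$ directly for each $t$, whereas you truncate at $t$ and argue by contradiction.
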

\begin{proof}
  Since $\omega^\alpha$ is a limit ordinal Lemma~\ref{fbrd-scat.lem.T-monotono} implies that it suffices
  to show the statement in case $n = 2$. So, let us show that
  $T(2, \omega^\alpha) = \infty$ for every countable ordinal~$\alpha \ge \omega$.

  Let $\alpha$ be a countable ordinal such that~$\alpha \ge \omega$. Since $\omega^\alpha$ is a countable
  scattered chain Theorem~\ref{fbrd-scat.thm.bracket} applies, so
  $\omega^\alpha \not\longrightarrow [\omega^{n_0}, \omega^{n_1}, \omega^{n_2}, \omega^{n_3}, \omega^{n_4}, \ldots]^2$,
  where $n_0 = 1$, $n_1 = n_2 = 2$, $n_3 = n_4 = 3$, and so on.
  Therefore, there exists a coloring
  $\gamma : \Emb(2, \omega^\alpha) \to \omega$ with the following property: for every $i < \omega$ and every
  subchain $H \subseteq \omega^\alpha$ such that $H \cong \omega^{n_i}$ we have that $i \in \gamma(\Emb(2, H))$.
  
  Now, take any $t \ge 2$ and consider the coloring $\chi_t : \Emb(2, \omega^\alpha) \to t$ given by
  $\chi_t(f) = \min\{t-1, \gamma(f)\}$. Let $S$ be an arbitrary subchain of $\omega^\alpha$ order-isomorphic to $\omega^\alpha$.  
  Since $\alpha \ge \omega$, for every $i < t$ there is a subchain $H_i \subseteq S$ order-isomorphic to $\omega^{n_i}$.
  By the construction of $\chi_t$ it then follows that $i \in \chi_t(\Emb(2, H_i)) \subseteq \chi_t(\Emb(2, S))$.
  Therefore, $|\chi_t(\Emb(2, S))| \ge t$.
\end{proof}

\begin{THM}\label{fbrd-scat.thm.MAIN}
  Let $\alpha$ be a countable ordinal.
  
  $(a)$ If $\alpha < \omega^\omega$ then $T(n, \alpha) < \infty$ for all $2 \le n < \omega$.

  $(b)$ If $\alpha \ge \omega^\omega$ then $T(n, \alpha) = \infty$ for all $2 \le n < \omega$.
\end{THM}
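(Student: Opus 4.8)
The plan is to assemble the theorem from the results of Sections~\ref{fbrd-scat.sec.alpha+m}--\ref{fbrd-scat.sec.alpha-pow-m} together with Lemmas~\ref{fbrd-scat.lem.sub-sum} and~\ref{fbrd-scat.lem.infty}; almost all of the work has already been done there, so what remains is essentially bookkeeping.

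For part $(a)$ a finite $\alpha$ is trivial (its only subchain order-isomorphic to itself is $\alpha$, so $T(n,\alpha)=|\Emb(n,\alpha)|<\infty$), so I would assume $\omega\le\alpha<\omega^\omega$ and write $\alpha$ in Cantor normal form $\alpha=\omega^{d_0}c_0+\ldots+\omega^{d_{k-1}}c_{k-1}$ with $d_0>\ldots>d_{k-1}\ge 0$ and $c_i\ge 1$; here $d_0<\omega$ since $\alpha<\omega^\omega$ and $d_0\ge 1$ since $\alpha\ge\omega$. Put $d=d_0$, $c=\max\{c_0,\ldots,c_{k-1}\}$ and consider the ``master ordinal'' $M=(\omega\cdot c+1)^d$. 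Starting from $T(n,\omega)=1$ and applying in turn Theorem~\ref{fbrd-scat.thm.alpha-cdot-m} (for $\omega\cdot c$), Theorem~\ref{fbrd-scat.thm.alpha+m} (for $\omega\cdot c+1$) and Theorem~\ref{fbrd-scat.thm.alpha-pow-m} (for $(\omega\cdot c+1)^d$), I get that $M$ has finite big Ramsey degrees. A routine induction on $d$ in ordinal arithmetic (using $(\omega^j c+\ldots+1)\cdot\omega=\omega^{j+1}$) gives
$$
  M=\omega^d\cdot c+\omega^{d-1}\cdot c+\ldots+\omega^1\cdot c+1,
$$
so $M$ is a sum $\gamma_0+\gamma_1+\ldots+\gamma_L$ in which, for each $1\le j\le d$, there are $c$ consecutive summands equal to $\omega^j$, while the last summand $\gamma_L$ equals $1$. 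This decomposition meets the hypotheses of Lemma~\ref{fbrd-scat.lem.sub-sum}: the $\gamma_i$ are weakly decreasing and every $\gamma_i$ with $i<L$ is a power $\omega^j$, $j\ge 1$, hence every remainder of it is order-isomorphic to it.

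It then remains to realise $\alpha$ as a subsum of $\gamma_0+\ldots+\gamma_L$. If $d_{k-1}\ge 1$ this is immediate: from the block of $\omega^d$'s choose $c_0\le c$ copies, then from the block of $\omega^{d_1}$'s choose $c_1\le c$ copies, and so on down to $c_{k-1}\le c$ copies of $\omega^{d_{k-1}}$ (the chosen indices are strictly increasing because $d_0>d_1>\ldots>d_{k-1}\ge 1$), so Lemma~\ref{fbrd-scat.lem.sub-sum} gives $T(n,\alpha)<\infty$ for all $n$. The case $d_{k-1}=0$ is the one delicate point: the Cantor expansion of $M$ contains only a single copy of $\omega^0=1$, so it cannot supply the finite summand $\omega^0\cdot c_{k-1}$ of $\alpha$ when $c_{k-1}\ge 2$, and a subsum alone will not do. I would handle this by putting $\beta=\omega^{d_0}c_0+\ldots+\omega^{d_{k-2}}c_{k-2}$ (here $k\ge 2$ and $d_{k-2}\ge 1$), obtaining $\beta$ as a subsum of $M$ exactly as above so that $\beta$ has finite big Ramsey degrees by Lemma~\ref{fbrd-scat.lem.sub-sum}, and then applying Theorem~\ref{fbrd-scat.thm.alpha+m} to conclude that $\alpha=\beta+c_{k-1}$ has finite big Ramsey degrees.

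For part $(b)$, given a countable $\alpha\ge\omega^\omega$, let $\gamma$ be the unique ordinal with $\omega^\gamma\le\alpha<\omega^{\gamma+1}$; then $\gamma$ is countable and $\gamma\ge\omega$ because $\alpha\ge\omega^\omega$. Writing $\alpha=\omega^\gamma\cdot c+\delta$ with $1\le c<\omega$ and $\delta<\omega^\gamma$ exhibits $\alpha$ as $c$ copies of $\omega^\gamma$ followed by $\delta$, a decomposition satisfying the hypotheses of Lemma~\ref{fbrd-scat.lem.sub-sum} since every remainder of $\omega^\gamma$ is order-isomorphic to $\omega^\gamma$ ($\gamma\ge 1$) and $\delta<\omega^\gamma$ sits last. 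Taking the single leading summand $\omega^\gamma$ as the subsum and reading off the quantitative estimate actually proved inside Lemma~\ref{fbrd-scat.lem.sub-sum} — namely $T(n,\omega^\gamma)\le T(n,\alpha)$ for every $n\ge 1$ — and combining it with Lemma~\ref{fbrd-scat.lem.infty} ($T(n,\omega^\gamma)=\infty$ for all $2\le n<\omega$, as $\gamma\ge\omega$), I conclude $T(n,\alpha)=\infty$ for all $2\le n<\omega$. I expect the only real obstacles to be the two bookkeeping points just flagged: the constant term in $(a)$, and the need to invoke the inequality hidden in the proof of Lemma~\ref{fbrd-scat.lem.sub-sum} (rather than its bare statement) in $(b)$, so as to get the conclusion for every $n\ge 2$ rather than merely for some $n$.
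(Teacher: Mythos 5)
Your proposal is correct and follows essentially the same route as the paper: building $(\omega\cdot c+1)^{d}$ from Theorems~\ref{fbrd-scat.thm.alpha+m}, \ref{fbrd-scat.thm.alpha-cdot-m} and~\ref{fbrd-scat.thm.alpha-pow-m}, expanding it and passing to a subsum via Lemma~\ref{fbrd-scat.lem.sub-sum} for part $(a)$, and combining Lemmas~\ref{fbrd-scat.lem.infty} and~\ref{fbrd-scat.lem.sub-sum} for part $(b)$. The two bookkeeping points you flag are real and are handled in the paper exactly as you propose: the constant term of the Cantor normal form is absorbed by an initial appeal to Theorem~\ref{fbrd-scat.thm.alpha+m}, and part $(b)$ indeed rests on the inequality $T(n,\beta)\le T(n,\alpha)$ established inside the proof of Lemma~\ref{fbrd-scat.lem.sub-sum} rather than on its bare statement.
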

\begin{proof}
  $(a)$
  By Theorem~\ref{fbrd-scat.thm.alpha+m} it suffices to show that every ordinal of the form
  $$
    \omega^{d_0} \cdot c_0 + \omega^{d_1} \cdot c_1 + \ldots + \omega^{d_{k-1}} \cdot c_{k-1}
  $$
  where $d_0 > d_1 > \ldots > d_{k-1} \ge 1$ and $c_0, c_1, \ldots, c_{k-1} \ge 1$
  has finite big Ramsey degrees.
  
  Let $m = \max\{c_0, c_1, \ldots, c_{k-1}\}$.
  Theorems~\ref{fbrd-scat.thm.alpha+m}, \ref{fbrd-scat.thm.alpha-cdot-m} and \ref{fbrd-scat.thm.alpha-pow-m} ensure that
  $(\omega \cdot m + 1)^{d_0}$ has finite big Ramsey degrees. Using the fact that $(\omega \cdot m + 1) \cdot \omega = \omega^2$
  it is easy to see that
  \begin{gather*}
      (\omega \cdot m + 1)^{d_0} = \omega^{d_0} \cdot m + \omega^{d_0 - 1} \cdot m + \ldots + \omega^2 \cdot m + \omega \cdot m + 1 =\\
      = \underbrace{\omega^{d_0} + \ldots + \omega^{d_0}}_m + \underbrace{\omega^{d_0 - 1} + \ldots + \omega^{d_0 - 1}}_m + \ldots +
        \underbrace{\omega + \ldots + \omega}_m + 1,
  \end{gather*}
  and the latter sum of ordinals has finite big Ramsey degrees, as we have just seen.
  Having in mind that every remainder of $\omega^d$ is order-isomorphic to $\omega^d$ whenever $d \ge 1$,
  Lemma~\ref{fbrd-scat.lem.sub-sum} now yields that
  $$
    \underbrace{\omega^{d_0} + \ldots + \omega^{d_0}}_{c_0} + \underbrace{\omega^{d_1} + \ldots + \omega^{d_1}}_{c_1} + \ldots +
    \underbrace{\omega^{d_{k-1}} + \ldots + \omega^{d_{k-1}}}_{c_{k-1}}
  $$
  has finite big Ramsey degrees as well.

  \bigskip

  $(b)$
  Assume that $\alpha \ge \omega^\omega$ is a countable ordinal and let
  $
    \alpha = \omega^{\beta_0} \cdot c_0 + \omega^{\beta_1} \cdot c_1 + \ldots + \omega^{\beta_{k-1}} \cdot c_{k-1}
  $
  be the Cantor normal form of $\alpha$ where $\beta_0 > \beta_1 > \ldots > \beta_{k-1} \ge 0$ and
  $c_0, c_1, \ldots, c_{k-1} < \omega$. Then, clearly, $\beta_0 \ge \omega$.
  By Lemma~\ref{fbrd-scat.lem.infty} we know that $T(n, \omega^{\beta_0}) = \infty$
  for all $2 \le n < \omega$. Lemma~\ref{fbrd-scat.lem.sub-sum} then yields that
  $T(n, \alpha) = \infty$ for all $2 \le n < \omega$.
\end{proof}

\section{Concluding remarks}
\label{fbrd-scat.sec.conclusion}

Our ultimate goal is to extend the result of Laver \cite{laver-decomposition} and
characterize scattered countable chains which have finite big Ramsey degrees.
By a slight extension of the results of this paper we can resolve the issue
for a very particular class of scattered chains which are not ordinals.
As a consequence we do justice to $\ZZ$ by proving that
it, along with $\omega$ and $\QQ$, has finite big Ramsey degrees.

For a well-ordered chain $A$ let $A^*$ denote the chain $A$ with the order reversed. Moreover, for
$d \in \{\Boxed+, \Boxed-\}$ let
$$
  A^{(d)} = \begin{cases}
    A, & d = \Boxed+\\
    A^*, & d = \Boxed-.
  \end{cases}
$$

\begin{LEM}\label{fbrd-scat.lem.scat-omega-simple}
  For any $n, m \ge 1$ and any choice $d_0, d_1, \ldots, d_{m-1} \in \{\Boxed+, \Boxed-\}$,
  $$\textstyle
    T(n, \sum_{i<m} \omega^{(d_i)}) = T(n, \omega \cdot m).
  $$
\end{LEM}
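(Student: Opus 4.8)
Write $L:=\sum_{i<m}\omega^{(d_i)}$. The plan is to rerun the analysis of Section~\ref{fbrd-scat.sec.alpha-cdot-m} with $L$ in place of $\omega\cdot m$, the only new feature being that some of the ``columns'' of $L$ are traversed downwards. Given $f\in\Emb(n,L)$ write $f(i)=(a_i,\ell_i)$ with $a_i\in\omega$, $\ell_i\in m$; since $f$ is an embedding and the blocks of $L$ are laid out in increasing order, the sequence $\ell_0\le\ell_1\le\ldots\le\ell_{n-1}$ is non-decreasing, so the indices sent into block~$\ell$ form an interval $I_\ell$. Exactly as for $\omega\cdot m$ I attach to $f$ the \emph{type} $\tp(f)=(p_0,\ldots,p_{m-1},\sigma)$, where $p_\ell=|I_\ell|$ and $(i,j)\in\sigma$ iff $a_i\le a_j$, together with $\val(f)$, the subchain of $\omega$ induced by $\{a_i:i<n\}$, whose length is the rank $r(\tp(f))$. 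The single point where the orientations intervene: a tuple $(p_0,\ldots,p_{m-1},\sigma)$ is realised by some $f\in\Emb(n,L)$ precisely when $\restr\sigma{I_\ell}$ is the ascending order of $I_\ell$ if $d_\ell=\Boxed+$ and the descending order of $I_\ell$ if $d_\ell=\Boxed-$; for such a \emph{realisable} type $\tau$ the map $f\mapsto(\tp(f),\val(f))$ is a bijection and, with $\tau$ fixed, $\val\colon\Emb_\tau(n,L)\to\Emb(r(\tau),\omega)$ is a bijection.

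With this dictionary the analogue of Lemma~\ref{fbrd-scat.lem.1} / Corollary~\ref{fbrd-scat.cor.1-omega} goes through verbatim: given a realisable type $\tau$ and a colouring $\chi\colon\Emb_\tau(n,L)\to k$, transport it along $\val$ to a colouring of $\Emb(r(\tau),\omega)$, apply the infinite Ramsey theorem to obtain an infinite $W\subseteq\omega$ monochromatic on $r(\tau)$-subsets, and note that $U:=\sum_{i<m}(W\times\{i\})$ is a subchain of $L$ order-isomorphic to $L$ (each $W\times\{i\}$ is an infinite subset of $\omega^{(d_i)}$, hence order-isomorphic to it) on which $\chi$ restricted to $\Emb_\tau(n,U)$ is constant. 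Iterating over the finitely many realisable types and then, as in the proof of Theorem~\ref{fbrd-scat.thm.omega.m}, replacing the final common index set by $m$ pairwise disjoint infinite pieces $W^{(0)},\ldots,W^{(m-1)}$ (say the residue classes modulo $m$) and re-forming $U=\sum_{i<m}(W^{(i)}\times\{i\})$, one kills every realisable non-strict type (it would force two equal values in two different blocks) and keeps at most one colour per realisable strict type. Hence $T(n,L)$ is at most the number of realisable strict types of~$L$.

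It remains to count those and to produce a matching lower bound. A realisable strict type is a linear order $\sigma$ on $n$ whose restriction to each $I_\ell$ is the prescribed (ascending or descending) order of $I_\ell$; the number of such $\sigma$ with a fixed $(p_0,\ldots,p_{m-1})$ is the shuffle number $\frac{n!}{p_0!\,p_1!\cdots p_{m-1}!}$, which does not depend on $d_0,\ldots,d_{m-1}$, and summing over all compositions $p_0+\ldots+p_{m-1}=n$ gives $m^n=|\Stp(n,m)|=T(n,\omega\cdot m)$ by Theorem~\ref{fbrd-scat.thm.omega.m}. This yields $T(n,L)\le T(n,\omega\cdot m)$. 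For the reverse inequality, colour $f\in\Emb(n,L)$ by the index of $\tp(f)$ among the realisable strict types when $\tp(f)$ is strict and by $0$ otherwise, and let $U\subseteq L$ be any subchain order-isomorphic to $L$. An argument in the spirit of Lemma~\ref{fbrd-scat.lem.emb-sep} (using that every proper final segment of $\omega$, and every nonempty proper initial segment of $\omega^*$, is order-isomorphic to the whole) shows that $U=\sum_{i<m}U_i$ with each $U_i$ an infinite subset of $\omega^{(d_i)}$; then for each realisable strict type one realises it inside $U$ by sending the index of $\sigma$-rank $k$ to the element of the appropriate $U_\ell$ whose value-coordinate is the $(k+1)$-th smallest available, the consistency of $\restr\sigma{I_\ell}$ with $d_\ell$ guaranteeing that the resulting map is an embedding of the required type. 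Thus all $m^n$ colours occur on $\Emb(n,U)$, so $T(n,L)\ge m^n=T(n,\omega\cdot m)$.

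The part most in need of care, and the only genuinely new one, is the bookkeeping of the first paragraph: pinning down exactly which tuples $(p_0,\ldots,p_{m-1},\sigma)$ are realisable over $L$ once reversed columns are allowed, and checking that $\val$ stays a bijection; together with the routine-but-not-entirely-trivial structural fact that every copy of $L$ inside $L$ splits blockwise into infinite pieces. Everything else is a transcription of Section~\ref{fbrd-scat.sec.alpha-cdot-m}, the pleasant outcome being that the number of realisable strict types — hence the big Ramsey degree — is insensitive to the choice of orientations $d_i$.
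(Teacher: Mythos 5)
Your argument is correct in substance but takes a genuinely different route from the paper's. You rerun the multiplicative-type machinery of Section~\ref{fbrd-scat.sec.alpha-cdot-m} directly on $L=\sum_{i<m}\omega^{(d_i)}$, determine which types are realisable once reversed blocks are allowed, and compute $T(n,L)=m^n$ outright, obtaining the stated equality only at the end via Theorem~\ref{fbrd-scat.thm.omega.m} together with the observation that the number of realisable strict types, $\sum_{p_0+\cdots+p_{m-1}=n} n!/(p_0!\cdots p_{m-1}!)=m^n$, does not depend on the orientations $d_i$. The paper instead proves the equality abstractly, without recomputing anything: it introduces the reflection $f\mapsto f'=\sum_{i<m}f_i^{(d_i)}$, which is an involution identifying $\Emb(n,L)$ with $\Emb(n,\omega\cdot m)$ and is compatible with passing to blockwise subcopies $\sum_i U_i^{(d_i)}\leftrightarrow\sum_i U_i$, and then transports colourings across this identification in both directions. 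The paper's proof is shorter and would work even if the value of $T(n,\omega\cdot m)$ were unknown; yours re-derives the value $m^n$ and makes explicit why the orientations are irrelevant. Both arguments ultimately rest on the structural fact you flag but only sketch, namely that every copy of $L$ (respectively of $\omega\cdot m$) inside itself meets each block in an infinite set; note that Lemma~\ref{fbrd-scat.lem.emb-sep} as stated covers neither case (it is about decreasing sequences of ordinals with remainder-invariance), so your appeal to it is only an analogy, and what is really needed is the separate observation that deleting finitely many points from $L$ leaves a copy of $L$ while $L$ does not embed into a proper subsum of its blocks (since $\omega$ and $\omega^*$ do not embed into one another). This is true and not hard, and the paper relies on exactly the same unproved fact in its ``analogous arguments'' for the reverse inequality, so I would not count it as a gap specific to your proof --- but if you write this version up, that is the step to spell out.
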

\begin{proof}
  For an embedding $g : A \hookrightarrow B$ of chains let $g^*$ denote the obvious embedding $A^* \hookrightarrow B^*$,
  and for $d \in \{\Boxed+, \Boxed-\}$ let
  $$
    g^{(d)} = \begin{cases}
      g, & d = \Boxed+\\
      g^*, & d = \Boxed-.
    \end{cases}
  $$
  In particular, if $g : n \hookrightarrow B$ we may safely take that $g^* : n \hookrightarrow B^*$ because $n^* \cong n$.
  
  Every $f : n \hookrightarrow \sum_{i<m} \omega^{(d_i)}$ can be in an obvious way represented as
  $f = \sum_{i<m} f_i$ where $f_i : n_i \hookrightarrow \omega^{(d_i)}$, $n_i \ge 0$ and $n = n_0 + \ldots + n_{m-1}$.
  It is easy to see, then, that $f' = \sum_{i<m} f_i^{(d_i)}$ is an embedding $n \hookrightarrow \omega \cdot m$.
  Keeping $n$ and $d_i$'s fixed, it is also easy to see that $f'' = f$.

  With all these technicalities set up we can now show that
  $$\textstyle
    T(n, \sum_{i<m} \omega^{(d_i)}) \le T(n, \omega \cdot m).
  $$
  Take any $k \ge 2$, any coloring $\chi : \Emb(n, \sum_{i<m} \omega^{(d_i)}) \to k$
  and define $\chi' : \Emb(n, \omega \cdot m) \to k$ by $\chi'(f) = \chi(f')$. Then there is a
  $U \subseteq \omega \cdot m$ order-isomorphic to $\omega \cdot m$ such that $|\chi'(\Emb(n, U))| \le T(n, \omega \cdot m)$.
  Since $U \cong \omega \cdot m$ there exist infinite $U_i \subseteq \omega$, $i < m$, such that $U = U_0 + \ldots + U_{m-1}$.
  Then $\sum_{i < m} U_i^{(d_i)} \cong \sum_{i<m} \omega^{(d_i)}$ and
  $$\textstyle
    \chi'(\Emb(n, U)) = \chi'(\Emb(n, \sum_{i<m} U_i)) = \chi(\Emb(n, \sum_{i<m} U_i^{(d_i)})).
  $$
  Therefore, $|\chi(\Emb(n, \sum_{i<m} U_i^{(d_i)}))| \le T(n, \omega \cdot m)$. The other inequality follows
  by analogous arguments.
\end{proof}

\begin{COR}
  $T(n, \ZZ) = 2^n$ for all $n \ge 1$.
\end{COR}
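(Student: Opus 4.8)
The plan is to recognize $\ZZ$ as an instance of the chains already handled in Lemma~\ref{fbrd-scat.lem.scat-omega-simple} and then invoke Theorem~\ref{fbrd-scat.thm.omega.m}. First I would observe that, as a linear order, $\ZZ$ is the negative integers followed by the non-negative integers, so $\ZZ \cong \omega^* + \omega$; in the notation introduced just before the lemma this is exactly $\sum_{i<2}\omega^{(d_i)}$ with $d_0 = \Boxed-$ and $d_1 = \Boxed+$. Since the relation $C \longrightarrow (B)^A_{k,t}$ manifestly depends only on the order-isomorphism types of the chains involved, $T(n, -)$ is an invariant of order-isomorphism (indeed the paper records that even bi-embeddability suffices), and hence $T(n, \ZZ) = T\big(n, \sum_{i<2}\omega^{(d_i)}\big)$ for every $n \ge 1$.

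Next I would apply Lemma~\ref{fbrd-scat.lem.scat-omega-simple} with $m = 2$ and the above choice of $d_0, d_1$ to obtain $T\big(n, \sum_{i<2}\omega^{(d_i)}\big) = T(n, \omega \cdot 2)$. Finally, Theorem~\ref{fbrd-scat.thm.omega.m} with $m = 2$ gives $T(n, \omega \cdot 2) = 2^n$. Chaining these three equalities yields $T(n, \ZZ) = 2^n$ for all $n \ge 1$, which is the assertion of the corollary.

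I do not expect any genuine obstacle here: everything substantial has already been proved, namely Lemma~\ref{fbrd-scat.lem.scat-omega-simple} (which reduces an alternating-direction finite sum of copies of $\omega$ to $\omega \cdot m$) and Theorem~\ref{fbrd-scat.thm.omega.m} (which computes $T(n, \omega \cdot m) = m^n$). The only point requiring a word of care is the bookkeeping remark that $\ZZ$ may be replaced by the formal sum $\omega^* + \omega$ to which Lemma~\ref{fbrd-scat.lem.scat-omega-simple} applies verbatim; once this is noted, the proof is a two-line chain of equalities.
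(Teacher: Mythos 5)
Your proof is correct and follows exactly the paper's own argument: identify $\ZZ \cong \omega^* + \omega$, apply Lemma~\ref{fbrd-scat.lem.scat-omega-simple} with $m=2$ to reduce to $\omega \cdot 2$, and conclude with Theorem~\ref{fbrd-scat.thm.omega.m}. No differences worth noting.
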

\begin{proof}
  Since $\ZZ \cong \omega^* + \omega$ we have that $T(n, \ZZ) = T(n, \omega^* + \omega) = T(n, \omega \cdot 2) = 2^n$
  using Lemma~\ref{fbrd-scat.lem.scat-omega-simple} and Theorem~\ref{fbrd-scat.thm.omega.m}.
\end{proof}

\section{Acknowledgements}

We would like to thank F.~Galvin and S.~Todor\v cevi\'c for their help and many insightful observations that led us to the solution of
the problem for all countable ordinals.

The first author gratefully acknowledges the support of the Grant No.\ 174019 of the Ministry of Education,
Science and Technological Development of the Republic of Serbia.

\end{document}